\newtheorem{theorem}{Theorem}
\newtheorem{proposition}{Proposition}
\title{
Complex moment-based method with nonlinear transformation for computing large and sparse interior singular triplets
}
\author[1,*]{Akira Imakura}
\author[1]{Tetsuya Sakurai}
\affil[1]{University of Tsukuba, Japan}
\email{imakura@cs.tsukuba.ac.jp}
\begin{document}
\maketitle
\thispagestyle{titlepage}

\begin{abstract}
This paper considers computing interior singular triplets corresponding to the singular values in some interval.
Based on the concept of the complex moment-based parallel eigensolvers, in this paper, we propose a novel complex moment-based method with a nonlinear transformation.
We also analyse the error bounds of the proposed method and provide some practical techniques.
Numerical experiments indicate that the proposed complex moment-based method with the nonlinear transformation computes accurate singular triplets for both exterior and interior problems within small computation time compared with existing methods.
\end{abstract}

\section{Introduction}
Given a rectangular matrix $A \in \mathbb{R}^{m \times n}$ $(m \geq n)$, let 
\begin{equation*}
  A = U \Sigma V^{\rm T} = \sum_{i=1}^n \sigma_i {\bm u}_i {\bm v}_i^{\rm T}
\end{equation*}
be a singular value decomposition of $A$, where $\sigma_i$ are singular values and ${\bm u}_i$ and ${\bm v}_i$ are the corresponding left and right singular vectors, respectively, and $U = [{\bm u}_1, {\bm u}_2, \dots, {\bm u}_n]$, $V = [{\bm v}_1, {\bm v}_2, \dots, {\bm v}_n]$ and $\Sigma = {\rm diag}(\sigma_1, \sigma_2, \dots, \sigma_n)$.
To compute partial singular triplets specifically corresponding to the larger part of singular values, there are several projection-type methods such as Golub-Kahan-Lanczos method \cite{golub1996matrix}, Jacobi-Davidson type method \cite{hochstenbach2001jacobi} and randomized SVD algorithm \cite{halko2011finding}.
\par
This paper considers computing interior singular triplets corresponding to the singular values in some interval,
\begin{equation}
  (\sigma_i, {\bm u}_i, {\bm v}_i), \quad
  \sigma_i \in \Omega := [a,b],
  \label{eq:psvd}
\end{equation}
where $0 \leq a < b$.
One of the simplest ideas to compute \eqref{eq:psvd} is to apply some eigensolver for solving the corresponding symmetric eigenvalue problems,
\begin{equation}
  A^{\rm T} A {\bm v} = \sigma^2 {\bm v}
  \quad \mbox{or} \quad
  AA^{\rm T}  {\bm u} = \sigma^2 {\bm u}, \quad
  \sigma \in \Omega = [a, b].
  \label{eq:sep}
\end{equation}
One possible choice for solving interior eigenvalue problem \eqref{eq:sep} is complex moment-based parallel eigensolvers first proposed in \cite{sakurai2003projection} that are one of the hottest parallel methods for solving interior eigenvalue problems.
However, as shown in Section~\ref{sec:experiment}, this simple strategy does not work well in some situation due to the numerical instability.
\par
Based on the concept of the complex moment-based eigensolvers, in this paper, we propose a novel complex moment-based method to compute interior singular triplets \eqref{eq:psvd}.
From the analysis of error bounds, we also show that the accuracy of the proposed method can be improved via a nonlinear problem although the target problem is a linear singular value problem.
Some practical techniques are also provided.
\par
The remainder of this paper is organized as follows.
In Section~\ref{sec:ss}, we briefly introduce the complex moment-based parallel eigensolvers.
In Section~\ref{sec:propose}, we propose a novel complex moment-based method for computing interior singular triplets and analyse its error bound.
Here, we also propose an improvement technique using a nonlinear transformation.
Numerical results are reported in Section~\ref{sec:experiment}.
Section~\ref{sec:conclusion} concludes the paper.
\par
Throughout the paper, the following notations are used.
We define the range space of the matrix $V = [{\bm v}_1, {\bm v}_2, \ldots, {\bm v}_L]$ by $\mathcal{R}(V) := {\rm span}\{ {\bm v}_1, {\bm v}_2,$ $ \dots, {\bm v}_L \}$.
We also use MATLAB notations.
\section{Complex moment-based parallel eigensolvers}
\label{sec:ss}
The proposed method in this paper is based on the concept of the complex moment-based parallel eigensolvers first proposed in \cite{sakurai2003projection} by Sakurai and Sugiura.
Therefore, here, we briefly introduce the basic concepts of the complex moment-based eigensolvers for solving interior generalized eigenvalue problems of the form:
\begin{equation*}
  A {\bm x}_i = \lambda_i B {\bm x}_i, \quad
  A, B \in \mathbb{C}^{n \times n}, \quad
  {\bm x}_i \in \mathbb{C}^n \setminus \{ {\bm 0} \}, \quad
  \lambda_i \in \Omega \subset \mathbb{C},
\end{equation*}
where $zB-A$ is non-singular on a boundary $\Gamma$ of the target region $\Omega$.
\par
The complex moment-based eigensolvers construct a special subspace using contour integral:
\begin{equation}
  \mathcal{S}_\Omega = \mathcal{R}(S), \quad S = [S_0, S_1, \dots, S_{M-1}], \quad
  S_k := \frac{1}{2 \pi {\rm i} } \oint_\Gamma z^k (zB-A)^{-1} BV_{\rm in} {\rm d}z,
  \label{eq:set_s}
\end{equation}
where $L, M \in \mathbb{N}_+$ are the input parameters and $V_{\rm in} \in \mathbb{C}^{n \times L}$ is an input matrix.
For this subspace $\mathcal{S}_\Omega$, we have the following theorem; see e.g., \cite{imakura2016relationships}.
\begin{theorem}
  The complex moment-based subspace $\mathcal{S}_\Omega$ is equivalent to an invariant subspace with respect to the eigenvectors corresponding to the eigenvalues in a given region $\Omega \subset \mathbb{C}$, that is,
  \begin{equation*}
    \mathcal{S}_\Omega = {\rm span}\{ {\bm x}_i | \lambda_i \in \Omega \},
  \end{equation*}
  if and only if ${\rm rank}(S) = d$, where $d$ is the number of target eigenvalues.
\end{theorem}
\par
Based on this theorem, complex moment-based eigensolvers are mathematically designed on projection methods \cite{imakura2016relationships}.
Practical algorithms are derived by approximating the contour integral \eqref{eq:set_s} using the numerical integration rule:
\begin{equation}
  \widehat{S}_k := \sum_{j=1}^N \omega_j z_j^k (z_j B - A)^{-1} BV,
  \label{eq:linear}
\end{equation}
where $z_j$ is a quadrature point and $\omega_j$ is its corresponding weight.
\par
The most time-consuming part of using complex moment-based eigensolvers involves solving linear systems \eqref{eq:linear} at each quadrature point.
Since these linear systems can be independently solved, the complex moment-based eigensolvers have a good scalability that was demonstrated in previous research \cite{kestyn2016pfeast,iwase2017efficient}.
\par
Thanks to the high parallel efficiency, complex moment-based eigensolvers have attracted considerable attention.
Currently, there are several methods including direct extensions of Sakurai and Sugiura's approach \cite{sakurai2007cirr,ikegami2010filter,ikegami2010contour,imakura2014block,imakura2016relationships,imakura2017block,imakura2017structure}, the FEAST eigensolver \cite{polizzi2009density} developed by Polizzi and its improvements \cite{tang2014feast,guttel2015zolotarev,kestyn2016pfeast,kestyn2016feast}.
High-performance parallel software based on the complex moment-based eigensolvers have been developed \cite{z-Pares,FEAST}.
Complex moment-based machine learning algorithms have also been developed \cite{imakura2019complex,yano2021efficient}.
For details of these methods, refer to the study by \cite{sakurai2019scalable} and the references therein.
\section{Complex moment-based method for computing interior singular triplets}
\label{sec:propose}
Herein, we propose a complex moment-based method for computing interior singular triplets \eqref{eq:psvd} and analyse its error bound.
Based on the analysis, we propose an improvement technique using a nonlinear transformation to improve accuracy of the proposed method.
Some practical techniques are also provided.
\subsection{Derivation of the proposed method}
Based on the concept of the complex moment-based parallel eigensolvers, now, we have the following theorem.
\begin{theorem}
  \label{thm:svd}
  Let $L, M \in \mathbb{N}_+$ be the input parameters and $V_{\rm in} \in \mathbb{R}^{n \times L}$ be an input matrix.
  We define $S \in \mathbb{R}^{n \times LM}$ and $S_k \in \mathbb{R}^{n \times L}$ as follows:
  \begin{equation}
    S := [ S_0, S_1, \dots, S_{M-1}], \quad
    S_k := \frac{1}{2 \pi {\rm i} } \oint_\Gamma z^k (zI-A^{\rm T}A)^{-1} V_{\rm in} {\rm d}z,
    \label{eq:ci}
  \end{equation}
  where $\Gamma$ is a positively oriented Jordan curve around $[a^2,b^2]$.
  Then, the subspaces $\mathcal{R}(AS)$ and $\mathcal{R}(S)$ are equivalent to subspaces with respect to the left and right singular vectors corresponding to the singular values in a given interval $\Omega = [a,b]$, i.e.,
%Then, the subspace $\mathcal{R}(S)$ is equivalent to an invariant subspace with respect to the right singular vectors corresponding to the singular values in a given interval $\Omega = [a,b]$, i.e.,
%
  \begin{equation*}
    \mathcal{R}(AS) = {\rm span}\{ {\bm u}_i | \sigma_i \in \Omega = [a,b] \},
    \quad \mbox{and} \quad
    \mathcal{R}(S) = {\rm span}\{ {\bm v}_i | \sigma_i \in \Omega = [a,b] \},
  \end{equation*}
  if and only if ${\rm rank}(S)=t$, where $t$ is the number of target singular values.
\end{theorem}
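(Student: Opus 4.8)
The plan is to reduce Theorem~\ref{thm:svd} to the already-stated theorem about the complex moment-based subspace $\mathcal{S}_\Omega$ in Section~\ref{sec:ss}, applied to the symmetric eigenvalue problem \eqref{eq:sep}. First I would observe that $A^{\rm T}A = V\Sigma^2 V^{\rm T}$ is symmetric with eigenpairs $(\sigma_i^2, {\bm v}_i)$, so setting $B = I$ and replacing the generic matrix by $A^{\rm T}A$ in \eqref{eq:set_s} makes the contour integral $S_k$ in \eqref{eq:ci} exactly the one from \eqref{eq:set_s}. The curve $\Gamma$ encircles $[a^2,b^2]$, and $\sigma_i \in [a,b] \iff \sigma_i^2 \in [a^2,b^2]$ (using $0 \le a < b$, so squaring is monotone on the relevant range); hence the region $\Omega$ in the eigenvalue sense contains precisely the eigenvalues $\sigma_i^2$ with $\sigma_i \in [a,b]$. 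Therefore the earlier theorem gives directly that $\mathcal{R}(S) = {\rm span}\{{\bm v}_i \mid \sigma_i \in [a,b]\}$ if and only if ${\rm rank}(S) = t$, where $t = d$ is the number of such eigenvalues counted appropriately. This settles the right-singular-vector half.

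For the left-singular-vector half, the plan is to push the subspace identity through multiplication by $A$. Writing $V_\Omega := [{\bm v}_i \mid \sigma_i \in \Omega]$ and $U_\Omega := [{\bm u}_i \mid \sigma_i \in \Omega]$, the SVD relation $A{\bm v}_i = \sigma_i {\bm u}_i$ gives $A V_\Omega = U_\Omega D$ with $D = {\rm diag}(\sigma_i : \sigma_i \in \Omega)$. Hence $\mathcal{R}(AS) = A\,\mathcal{R}(S) = A\,\mathcal{R}(V_\Omega) = \mathcal{R}(AV_\Omega) = \mathcal{R}(U_\Omega D)$. The remaining point is that $D$ is nonsingular: this is where the hypothesis $0 \le a$ needs a small amount of care, since if $a = 0$ a zero singular value could in principle lie in $\Omega$. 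I would note that for $\sigma_i = 0$ the corresponding ${\bm u}_i$ is not uniquely determined by $A$ anyway, and the natural reading (consistent with how the complex moment-based subspace is built from $A^{\rm T}A$, whose kernel contributes nothing to $\mathcal{R}(AS)$) is that the target singular values in question are positive, or equivalently that one restricts to $\sigma_i > 0$; under that reading $D$ is invertible and $\mathcal{R}(U_\Omega D) = \mathcal{R}(U_\Omega)$, completing the forward direction. For the converse, if ${\rm rank}(S) < t$ then $\mathcal{R}(S)$ is a proper subspace of ${\rm span}\{{\bm v}_i \mid \sigma_i \in \Omega\}$ by the earlier theorem, and since $A$ restricted to that span is injective (again $D$ nonsingular), $\mathcal{R}(AS)$ is likewise proper, so neither equality can hold.

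The main obstacle is not conceptual but bookkeeping: making precise the correspondence between ``number of target eigenvalues $d$'' in the earlier theorem and ``number of target singular values $t$'' here, including multiplicities, and handling the boundary/degenerate cases --- a zero singular value when $a=0$, or singular values of $A^{\rm T}A$ of multiplicity greater than one, or the requirement that $V_{\rm in}$ has no special deficiency (implicitly subsumed by the rank condition). I would state explicitly that $t$ counts target singular values with multiplicity and equals the dimension of ${\rm span}\{{\bm v}_i \mid \sigma_i \in \Omega\}$, so that $t = d$ for the associated $A^{\rm T}A$-eigenproblem, after which the if-and-only-if transfers verbatim. Modulo these matters the proof is essentially a two-line invocation of the existing theorem plus the elementary fact that $\mathcal{R}(AW) = A\,\mathcal{R}(W)$ and that left-multiplication by $A$ is a bijection between $\mathcal{R}(V_\Omega)$ and $\mathcal{R}(U_\Omega)$.
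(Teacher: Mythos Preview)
Your proposal is correct. The paper takes a more direct but much terser route: rather than invoking Theorem~1 as a black box, it simply expands the resolvent $(zI - A^{\rm T}A)^{-1}$ via the SVD $A = U\Sigma V^{\rm T}$ and applies Cauchy's integral formula to obtain the closed form $S_k = \sum_{\sigma_i \in \Omega} (\sigma_i^2)^k {\bm v}_i {\bm v}_i^{\rm T} V_{\rm in}$, then declares that this formula proves the theorem. In effect the paper re-derives the special case of Theorem~1 needed here, whereas you factor through Theorem~1 itself; the underlying mechanism (spectral decomposition plus residue calculus) is identical, so the two arguments differ only in packaging. What your reduction buys is economy and a clean separation of concerns; what the paper's direct computation buys is an explicit formula for $S_k$ that is immediately reused in the algorithm and error analysis that follow. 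Your treatment of the $\mathcal{R}(AS)$ half, via $AV_\Omega = U_\Omega D$ together with the invertibility of $D$ for positive target $\sigma_i$, is considerably more explicit than the paper's, which leaves both that step and the rank equivalence to the reader; likewise your remarks on the $a=0$ boundary case and on multiplicities fill in details the paper omits.
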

\begin{proof}
  Using the singular value decomposition of $A$, $A = U \Sigma V^{\rm T}$, and Cauchy's integral formula, the matrix $S_k$ can be decomposed as
  \begin{equation*}
    S_k 
    = \frac{1}{2 \pi {\rm i} } \oint_\Gamma \sum_{i=1}^n \frac{z^k}{z - \sigma_i^2} {\bm v}_i {\bm v}_i^{\rm T} V_{\rm in} {\rm d}z 
    = \sum_{\sigma_i \in \Omega} (\sigma_i^2)^k {\bm v}_i {\bm v}_i^{\rm T} V_{\rm in},
  \end{equation*}
  that proves Theorem~\ref{thm:svd}.
\end{proof}
%
%This theorem denotes that the target singular triplets \eqref{eq:psvd} are computed by the so-called two-sided projection method based on the Galerkin condition using the subspaces $\mathcal{R}(AS)$ and $\mathcal{R}(S)$ constructed by contour integral \eqref{eq:ci}.
%
\par
This theorem denotes that the target singular triplets \eqref{eq:psvd} can be obtained by some projection method with $\mathcal{R}(AS)$ and/or $\mathcal{R}(S)$ constructed by contour integral \eqref{eq:ci}.
In practice, the contour integral \eqref{eq:ci} is approximated by a numerical integration rule such as the $N$-point trapezoidal rule, as follows:
\begin{equation*}
  \widehat{S} := [\widehat{S}_0, \widehat{S}_1, \dots, \widehat{S}_{M-1}], \quad
  \widehat{S}_k := \sum_{j=1}^{N}\omega_j z_j^k (z_j I - A^{\rm T}A)^{-1} V_{\rm in},
\end{equation*}
where $(z_j,\omega_j), j = 1, 2, \dots, N$, are the quadrature points and the corresponding weights, respectively.
Then, the approximate singular triplets are computed by a projection method.
Here, we consider using two-sided projection method with subspaces $\mathcal{R}(A\widehat{S})$ and $\mathcal{R}(\widehat{S})$.
Note that one can also consider one-sided projection method.
\par
Let $\widetilde{U}$ and $\widetilde{V}$ be the orthogonal matrices whose columns are orthonormal basis of $\mathcal{R}(A\widehat{S})$ and $\mathcal{R}(\widehat{S})$, respectively.
From the definition of the subspace $\mathcal{R}(A\widehat{S})$, the matrix $\widetilde{U}$ is obtained by a QR factorization of $A \widetilde{V}$,
\begin{equation}
  A \widetilde{V} = \widetilde{U} B.
  \label{eq:avub}
\end{equation}
In a two-sided projection method, singular triplets are approximated as
\begin{equation*}
  ({\sigma}_i, {\bm u}_i, {\bm v}_i) 
  \approx (\widehat{\sigma}_i, \widehat{\bm u}_i, \widehat{\bm v}_i)
  = (\phi_i, \widetilde{U} {\bm p}_i, \widetilde{V} {\bm q}_i), \quad
\end{equation*}
and set $P = [{\bm p}_1, {\bm p}_2, \dots, {\bm p}_{LM}], Q = [{\bm q}_1, {\bm q}_2, \dots, {\bm q}_{LM}]$ and ${\Phi} = {\rm diag}(\phi_1, \phi_2, \dots$, $\phi_{LM})$.
Based on the Galerkin condition, the residual $R = A - (\widetilde{U} P) \Phi (\widetilde{V} Q)^{\rm T}$ is orthogonalized to the subspaces $\mathcal{R}(A \widehat{S})$ and $\mathcal{R}( \widehat{S})$, that is $\widetilde{U}^{\rm T}R \widetilde{V} = O$.
From \eqref{eq:avub}, we have $\widetilde{U}^{\rm T}A \widetilde{V} = B$, then the target singular triplets can be approximated by using a singular value decomposition of the matrix $B$,
\begin{equation}
  B = P \Phi Q = \sum_{\sigma_i \in \Omega} \phi_i {\bm p}_i {\bm q}_i^{\rm T}.
  \label{eq:bsvd}
\end{equation}
\par
To improve the accuracy, we can use an iteration technique.
The basic concept is that the matrix $\widehat{S}_0^{(\ell-1)}$ is iteratively calculated, from the initial matrix $\widehat{S}_0^{(0)} = V_{\rm in}$ as follows:
\begin{equation}
  \widehat{S}^{(\nu)}_0 := \sum_{j=1}^{N} \omega_j (z_jI-A^{\rm T}A)^{-1} \widehat{S}_0^{(\nu-1)}, \quad
  \nu = 1, 2, \ldots, \ell-1.
  \label{eq:iter1}
\end{equation}
Then, $\widehat{S}^{(\ell)}$ is constructed from $\widehat{S}_0^{(\ell-1)}$ by
\begin{equation}
  \widehat{S}^{(\ell)} := [\widehat{S}_0^{(\ell)}, \widehat{S}_1^{(\ell)}, \ldots, \widehat{S}_{M-1}^{(\ell)} ], \quad
  \widehat{S}^{(\ell)}_k := \sum_{j=1}^{N} \omega_j z_j^k (z_jI-A^{\rm T}A)^{-1} \widehat{S}_0^{(\ell-1)}.
  \label{eq:iter2}
\end{equation}
Additionally, for improving the numerical stability, we use a low-rank approximation with a threshold $\delta$ based on the singular value decomposition of $\widehat{S}^{(\ell)}$:
% %
% \begin{equation*}
%         \zeta_j = \frac{z_j - \gamma}{\rho},
% \end{equation*}
% %
% where $\gamma =(a+b)/2, \rho=(b-a)/2$.
% We also use a low-rank approximation with a threshold $\delta$ based on the singular value decomposition of $\widehat{S}$:
%
\begin{equation}
  \widehat{S}^{(\ell)} = [U_{S1}, U_{S2}] \left[
    \begin{array}{ll}
      \Sigma_{S1} & O \\
      O & \Sigma_{S2}
    \end{array}
  \right] \left[
    \begin{array}{ll}
      W_{S1}^{\rm T} \\
      W_{S2}^{\rm T}
    \end{array}
  \right] \approx U_{S1} \Sigma_{S1} W_{S1}^{\rm T},
  \label{eq:low-rank}
\end{equation}
where $\Sigma_{S1}$ is a diagonal matrix whose diagonal entries are the larger part of the singular values, and the columns of $U_{S1}, W_{S1}$ are the corresponding singular vectors.
Then, $U_{S1}$ is used for projection method instead of $\widehat{S}$.
\par
Because of the symmetric property of $A^{\rm T}A$, if quadrature points and the corresponding weights are symmetric about the real axis,
\begin{equation*}
  (z_j,\omega_j) = (\overline{z}_{j+N/2},\overline{\omega}_{j+N/2}),
  \quad j=1, 2, \dots, N/2,
\end{equation*}
we can reduce the number of linear systems as
\begin{equation*}
  \widehat{S}_k = 2 \sum_{j=1}^{N/2} {\rm Re}\left( \omega_j z_j^k (z_j I - A^{\rm T}A)^{-1} V_{\rm in} \right).
\end{equation*}
\par
The practical algorithm of the proposed method is shown in Algorithm~\ref{alg:ss-svd}.
One of the most time-consuming part of the complex moment-based method involves solving linear systems at each quadrature point in \eqref{eq:iter1} and \eqref{eq:iter2}.
However, as these linear systems can be independently solved, the proposed method is expected to exhibit good scalability in the same manner as the complex moment-based parallel eigensolvers.
\begin{algorithm}[t]
  \caption{A block SS--SVD method}
  \label{alg:ss-svd}
  \begin{algorithmic}[1]
    \REQUIRE $L, M, N, \ell \in \mathbb{N}_+, \delta \in \mathbb{R}, V_{\rm in} \in \mathbb{R}^{n \times L}, (z_j, \omega_j)$ for $j = 1, 2, \dots, N/2$
    \ENSURE Approximate singular triplet $(\widehat{\sigma}_i, \widehat{\bm u}_i, \widehat{\bm v}_i)$ for $i = 1, 2, \dots, \widehat{t}$
    \STATE Compute $\widehat{S}_0^{(\nu)} = 2 \sum_{j=1}^{N/2} {\rm Re}\left( \omega_j (z_jI-A^{\rm T}A)^{-1}\widehat{S}_0^{(\nu-1)} \right)$ for $\nu = 1, 2, \ldots, \ell-1$
    \STATE Compute $\widehat{S}_k^{(\ell)} = 2 \sum_{j=1}^{N/2} {\rm Re}\left( \omega_j z_j^k(z_jI-A^{\rm T}A)^{-1}\widehat{S}_0^{(\ell-1)} \right)$ for $k = 0, 1, \ldots, M-1$, and set $\widehat{S}^{(\ell)} = [\widehat{S}_0^{(\ell)}, \widehat{S}_1^{(\ell)}, \dots, \widehat{S}_{M-1}^{(\ell)}]$
    \STATE Compute low-rank approx. of $\widehat{S}^{(\ell)}$ using the threshold $\delta$: \\ $\widehat{S}^{(\ell)}= [U_{S1}, U_{S2}] [\Sigma_{S1}, O; O, \Sigma_{S2}] [W_{S1}, W_{S2}]^{\rm T} \approx U_{S1} \Sigma_{S1} W_{S1}^{\rm T}$, and set $\widetilde{V} = U_{S1}$
    \STATE Compute QR factorization of $AU_{S1}$: $[\widetilde{U},B] = {\rm qr}(A \widetilde{V})$
    \STATE Compute singular triplets $(\phi_i, {\bm p}_i, {\bm q}_i)$ of the matrix $B$ and set $(\widehat{\sigma}_i, \widehat{\bm u}_i, \widehat{\bm v}_i) = (\phi_i, \widetilde{U} {\bm p}_i, \widetilde{V} {\bm q}_i)$
  \end{algorithmic}
\end{algorithm}
\subsection{Error analysis of the proposed method}
Assume that $(z_j,\omega_j)$ satisfy
\begin{equation}
  \sum_{j=1}^N \omega_j z_j^k \left\{ 
    \begin{array}{ll}
      = 0, & k = 0, 1, \dots, N-2 \\
      \neq 0, & k = -1
    \end{array}
  \right..
  \label{eq:cond}
\end{equation}
Here, we have the following proposition for $\widehat{S}$; see, e.g., \cite{imakura2016error}.
\begin{proposition}
  \label{prop:sk=aks}
  Let $(z_j, \omega_j)$ satisfy \eqref{eq:cond}, then we have the following relationship,
  \begin{equation*}
    \widehat{S}_k 
    = \sum_{j=1}^{N} \sum_{i=1}^n \frac{\omega_j z_j^k}{z_j - \sigma_i^2}  {\bm v}_i {\bm v}_i^{\rm T} V_{\rm in}
    = \sum_{i=1}^n \sigma_i^{2k} \left( \sum_{j=1}^{N} \frac{\omega_j}{z_j - \sigma_i^2} \right)  {\bm v}_i {\bm v}_i^{\rm T} V_{\rm in}
    = (A^{\rm T} A)^k \widehat{S}_{0},
  \end{equation*}
  for $k = 0, 1, \dots, N-1$.
% %
% \begin{equation*}
%         \sum_{j=1}^{N} \frac{\omega_j z_j^k}{z_j - \sigma_i^2} = \sigma_i^{2k} \sum_{j=1}^{N} \frac{\omega_j}{z_j - \sigma_i^2} , \quad
%         k = 0, 1, \ldots, N-1,
% \end{equation*}
% %
\end{proposition}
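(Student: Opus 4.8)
The plan is to expand both sides in the eigenbasis of $A^{\rm T}A$ and then exploit the moment condition \eqref{eq:cond} through an elementary polynomial-division identity. First I would invoke the singular value decomposition $A = U\Sigma V^{\rm T}$ to write $A^{\rm T}A = \sum_{i=1}^n \sigma_i^2 {\bm v}_i {\bm v}_i^{\rm T}$, so that the shifted resolvent admits the spectral form $(z_j I - A^{\rm T}A)^{-1} = \sum_{i=1}^n (z_j - \sigma_i^2)^{-1} {\bm v}_i {\bm v}_i^{\rm T}$ (legitimate since no $z_j$ coincides with an eigenvalue $\sigma_i^2$ of $A^{\rm T}A$). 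Substituting this into the definition of $\widehat{S}_k$ and interchanging the two finite sums over $i$ and $j$ gives the first displayed equality and, after pulling the $i$-sum to the outside, reduces the proposition to the scalar identity
\[
  \sum_{j=1}^N \frac{\omega_j z_j^k}{z_j - \sigma_i^2} = \sigma_i^{2k} \sum_{j=1}^N \frac{\omega_j}{z_j - \sigma_i^2}, \qquad k = 0, 1, \dots, N-1,
\]
holding for each index $i$.

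Establishing this scalar identity is the one place where the hypotheses really do work, so I would treat it as the heart of the argument. The key is the polynomial division
\[
  \frac{z_j^k}{z_j - \sigma_i^2} = \frac{\sigma_i^{2k}}{z_j - \sigma_i^2} + \sum_{l=0}^{k-1} \sigma_i^{2(k-1-l)} z_j^l,
\]
which is nothing more than the factorization $z_j^k - \sigma_i^{2k} = (z_j - \sigma_i^2)\sum_{l=0}^{k-1} \sigma_i^{2(k-1-l)} z_j^l$ divided through by $z_j - \sigma_i^2$. Multiplying by $\omega_j$, summing over $j$, and interchanging sums, the polynomial remainder contributes $\sum_{l=0}^{k-1} \sigma_i^{2(k-1-l)} \bigl( \sum_{j=1}^N \omega_j z_j^l \bigr)$. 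Here the restriction $k \le N-1$ is exactly what is needed: it forces every exponent $l$ occurring in the remainder to satisfy $0 \le l \le k-1 \le N-2$, so each inner moment $\sum_{j=1}^N \omega_j z_j^l$ vanishes by \eqref{eq:cond}, and only the $\sigma_i^{2k}(z_j-\sigma_i^2)^{-1}$ term survives. This yields the scalar identity.

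Finally I would reassemble. Feeding the scalar identity back into the expansion gives $\widehat{S}_k = \sum_{i=1}^n \sigma_i^{2k} \bigl( \sum_{j=1}^N \omega_j/(z_j - \sigma_i^2) \bigr) {\bm v}_i {\bm v}_i^{\rm T} V_{\rm in}$, which is the second displayed equality. Since $(A^{\rm T}A)^k {\bm v}_i = \sigma_i^{2k} {\bm v}_i$, the factor $\sigma_i^{2k}$ can be absorbed by $(A^{\rm T}A)^k$ acting on the left, and the remaining sum $\sum_{i=1}^n \bigl( \sum_{j=1}^N \omega_j/(z_j - \sigma_i^2) \bigr) {\bm v}_i {\bm v}_i^{\rm T} V_{\rm in}$ is precisely the spectral expansion of $\widehat{S}_0 = \sum_{j=1}^N \omega_j (z_j I - A^{\rm T}A)^{-1} V_{\rm in}$; hence $\widehat{S}_k = (A^{\rm T}A)^k \widehat{S}_0$. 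I do not anticipate any genuine obstacle: the only point needing care is the degree bookkeeping ($k \le N-1 \Rightarrow l \le N-2$) so that \eqref{eq:cond} applies, while the nonvanishing of the order $-1$ moment is not used here — it matters elsewhere, for controlling how well $\widehat{S}_0$ approximates the exact moment $S_0$.
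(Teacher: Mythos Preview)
Your argument is correct and complete. The spectral expansion of the resolvent, the polynomial-division identity $z_j^k - \sigma_i^{2k} = (z_j - \sigma_i^2)\sum_{l=0}^{k-1}\sigma_i^{2(k-1-l)}z_j^l$, and the degree bookkeeping $l \le k-1 \le N-2$ that makes \eqref{eq:cond} applicable are exactly the ingredients needed; your closing remark that the $k=-1$ condition plays no role in this particular statement is also accurate.

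As for comparison with the paper: the paper does not actually supply a proof of this proposition. It is stated with a pointer to \cite{imakura2016error} and then used immediately afterwards to derive the factorization $\widehat{S}^{(\ell)} = F^\ell K$. Your write-up therefore fills in precisely what the paper leaves to the cited reference, and the route you take (eigenbasis expansion plus the scalar moment identity) is the standard one used in that literature.
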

Let $f(\sigma_i)$ be a filter function
\begin{equation*}
  f(\sigma_i) := \sum_{j=1}^{N} \frac{\omega_j}{z_j - \sigma_i^2},
\end{equation*}
commonly used in the analyses of some eigensolvers \cite{tang2014feast, imakura2016error, imakura2016relationships, schofield2012spectrum, guttel2015zolotarev}.
Then, the matrix $\widehat{S}_k^{(\ell)}$ can be rewritten as
\begin{align*}
  \widehat{S}_k^{(\ell)} 
  &= \sum_{i=1}^n \left( \sum_{j=1}^{N} \frac{\omega_j z_j^k}{z_j - \sigma_i^2} \right) \left( \sum_{j=1}^{N} \frac{\omega_j}{z_j - \sigma_i^2} \right)^{\ell-1} {\bm v}_i {\bm v}_i^{\rm T} V_{\rm in} \\
  &= \sum_{i=1}^n (\sigma_i^2)^k \left( \sum_{j=1}^{N} \frac{\omega_j}{z_j - \sigma_i^2} \right)^\ell {\bm v}_i {\bm v}_i^{\rm T} V_{\rm in} \\
  &= \sum_{i=1}^n \sigma_i^{2k} (f(\sigma_i))^{\ell} {\bm v}_i {\bm v}_i^{\rm T} V_{\rm in} \\
  &= V \Sigma^{2k} V^{\rm T} V (f(\Sigma))^\ell V^{\rm T} V_{\rm in},
\end{align*}
where $f(\Sigma) = {\rm diag}(f(\sigma_1), f(\sigma_2), \dots, f(\sigma_n))$ that privides
\begin{equation}
  S^{(\ell)} = F^\ell K
  \label{eq:S=FK}
\end{equation}
with
\begin{equation*}
  F = V f(\Sigma^2) V^{\rm T}, \quad K = [V_{\rm in}, (A^{\rm T}A) V_{\rm in}, \dots, (A^{\rm T}A)^{M-1} V_{\rm in}].
\end{equation*}
\par
Using \eqref{eq:S=FK}, we have the following theorem for the error bound of the proposed method in the same manner as an error analysis of the subspace iteration method, see, e.g., Lemma~6.2.1 of \cite{chatelin1993eigenvalues} and Theorem~5.2 of \cite{saad2011numerical}.
\begin{theorem}
  \label{thm:error}
  Let $(\sigma_i,{\bm u}_i, {\bm v}_i)$ be exact singular triplets of $A$.
  Assume that $f(\sigma_i)$ are ordered by decreasing magnitude $|f(\sigma_i)| \geq |f(\sigma_{i+1})|$.
  Define $\mathcal{P}_U^{(\ell)}$ and $\mathcal{P}_V^{(\ell)}$ as orthogonal projectors onto the subspaces $\mathcal{R}( A\widehat{S}^{(\ell)} )$ and $\mathcal{R}(\widehat{S}^{(\ell)})$, respectively.
  We also define $\mathcal{P}_{LM}$ as the spectral projector with an invariant subspace ${\rm span}\{ {\bm v}_1, {\bm v}_2, \ldots, {\bm v}_{LM} \}$.
  Assume that the matrix $\mathcal{P}_{LM} K$ is full rank.
  Then, for each right singular vector ${\bm v}_i, i = 1, 2, \ldots, LM$, there exists a unique vector ${\bm s}_i \in \mathcal{R}(K)$ such that $\mathcal{P}_{LM} {\bm s}_i = {\bm v}_i$.
  Here, we have
  \begin{equation*}
    \| (I - \mathcal{P}_U^{(\ell)} ) {\bm u}_i \|_2
    \leq \alpha_i \beta_i \left| \frac{f(\sigma_{LM+1})}{f(\sigma_i)} \right|^\ell, \quad
    i = 1, 2, \ldots, LM,
  \end{equation*}
  and
  \begin{equation*}
    \| (I - \mathcal{P}_V^{(\ell)} ) {\bm v}_i \|_2
    \leq \beta_i \left| \frac{f(\sigma_{LM+1})}{f(\sigma_i)} \right|^\ell, \quad
    i = 1, 2, \ldots, LM,
  \end{equation*}
  where $\alpha_i = \max_{j\geq LM+1} \sigma_j / \sigma_i$ and $\beta_i = \| {\bm v}_i - {\bm s}_i \|_2$.
\end{theorem}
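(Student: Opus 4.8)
The plan is to reduce both bounds to the classical error estimate for subspace iteration, using the factorization $\widehat{S}^{(\ell)} = F^\ell K$ from \eqref{eq:S=FK}. First I would handle the right singular vectors. The key observation is that $F = V f(\Sigma^2) V^{\rm T}$ is symmetric with eigenvectors ${\bm v}_i$ and eigenvalues $f(\sigma_i)$, so $\mathcal{R}(\widehat{S}^{(\ell)}) = \mathcal{R}(F^\ell K)$ is exactly the space produced by $\ell$ steps of subspace iteration applied to the block $K$ with iteration matrix $F$. Under the ordering assumption $|f(\sigma_1)| \ge |f(\sigma_2)| \ge \cdots$ and the full-rank hypothesis on $\mathcal{P}_{LM} K$, the standard argument (Lemma~6.2.1 of \cite{chatelin1993eigenvalues}, Theorem~5.2 of \cite{saad2011numerical}) gives, for each $i \le LM$, a vector ${\bm s}_i \in \mathcal{R}(K)$ with $\mathcal{P}_{LM}{\bm s}_i = {\bm v}_i$, and then $F^\ell {\bm s}_i \in \mathcal{R}(\widehat{S}^{(\ell)})$ approximates ${\bm v}_i$. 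Writing ${\bm s}_i = {\bm v}_i + ({\bm s}_i - {\bm v}_i)$ and expanding ${\bm s}_i - {\bm v}_i$ in the orthonormal eigenbasis $\{{\bm v}_j\}$, the component along ${\bm v}_i$ is annihilated (it lies in the range of $\mathcal{P}_{LM}$ and is killed by the condition $\mathcal{P}_{LM}{\bm s}_i = {\bm v}_i$), so only components with $j \ge LM+1$ survive; applying $F^\ell$ multiplies the $j$-th component by $f(\sigma_j)^\ell$ while the ${\bm v}_i$ part keeps factor $f(\sigma_i)^\ell$. Dividing through, the distance from ${\bm v}_i$ to $\mathcal{R}(\widehat{S}^{(\ell)})$ is at most $\|{\bm s}_i - {\bm v}_i\|_2 \max_{j \ge LM+1}|f(\sigma_j)/f(\sigma_i)|^\ell = \beta_i |f(\sigma_{LM+1})/f(\sigma_i)|^\ell$, since $|f(\sigma_{LM+1})|$ is the largest among $j \ge LM+1$. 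Because $\|(I - \mathcal{P}_V^{(\ell)}){\bm v}_i\|_2$ is precisely that distance, the second bound follows.

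Next I would transfer the estimate to the left singular vectors. Since $A = U\Sigma V^{\rm T}$, we have $A\widehat{S}^{(\ell)} = U\Sigma V^{\rm T} F^\ell K = U\Sigma f(\Sigma^2)^\ell V^{\rm T} K$, so $\mathcal{R}(A\widehat{S}^{(\ell)})$ is the subspace-iteration space for the iteration matrix $G = U\,\sigma f(\sigma^2)\,U^{\rm T}$ (diagonal in the left basis with entries $\sigma_j f(\sigma_j)^{?}$ — more carefully, $A\widehat{S}^{(\ell)} = U \bigl(\Sigma f(\Sigma^2)^\ell\bigr) V^{\rm T} K = \widetilde{G}^\ell (A K')$-type form). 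Concretely, starting from ${\bm s}_i$ with $\mathcal{P}_{LM}{\bm s}_i = {\bm v}_i$, the vector $A F^\ell {\bm s}_i = U \Sigma f(\Sigma^2)^\ell V^{\rm T}{\bm s}_i$ lies in $\mathcal{R}(A\widehat{S}^{(\ell)})$; expanding ${\bm s}_i = {\bm v}_i + \sum_{j \ge LM+1} c_j {\bm v}_j$ and noting $A{\bm v}_j = \sigma_j {\bm u}_j$, we get $A F^\ell {\bm s}_i = \sigma_i f(\sigma_i)^\ell {\bm u}_i + \sum_{j\ge LM+1} c_j \sigma_j f(\sigma_j)^\ell {\bm u}_j$. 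Normalizing by $\sigma_i f(\sigma_i)^\ell$ and bounding the tail in norm gives $\|(I - \mathcal{P}_U^{(\ell)}){\bm u}_i\|_2 \le \bigl(\max_{j\ge LM+1}\sigma_j/\sigma_i\bigr)\,\|{\bm s}_i - {\bm v}_i\|_2\,\max_{j\ge LM+1}|f(\sigma_j)/f(\sigma_i)|^\ell = \alpha_i \beta_i |f(\sigma_{LM+1})/f(\sigma_i)|^\ell$, using orthonormality of $\{{\bm u}_j\}$.

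I expect the main obstacle to be the bookkeeping in the left-vector case: one must be careful that the extra factor $\sigma_j/\sigma_i$ arises only from the $A$-action and that it does not interact with the $f$-ratio, and that the same ${\bm s}_i$ (hence the same $\beta_i$) works for both bounds — this is why the theorem statement ties the two estimates to a single ${\bm s}_i$. A secondary point requiring care is justifying that $F^\ell {\bm s}_i$ (resp. $A F^\ell {\bm s}_i$) is the \emph{best} approximant, or at least a valid one, so that the orthogonal-projector distance is bounded by the computed quantity; this is exactly where the full-rank assumption on $\mathcal{P}_{LM}K$ enters, guaranteeing that the $\{{\bm s}_i\}$ span a full $LM$-dimensional space and that the decomposition ${\bm s}_i = {\bm v}_i + (\text{tail})$ is available for every target index. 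The rest is the routine geometric-decay estimate identical to standard subspace-iteration analysis.
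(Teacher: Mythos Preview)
Your proposal is correct and follows essentially the same route as the paper: construct ${\bm s}_i \in \mathcal{R}(K)$ with $\mathcal{P}_{LM}{\bm s}_i = {\bm v}_i$, use $F^\ell{\bm s}_i \in \mathcal{R}(\widehat{S}^{(\ell)})$ (resp.\ $AF^\ell{\bm s}_i \in \mathcal{R}(A\widehat{S}^{(\ell)})$) as the candidate approximant, and bound the tail supported on $\{{\bm v}_j : j\ge LM+1\}$. The only cosmetic difference is that the paper phrases the tail bound via operator norms $\|A\mathcal{P}_{LM}^\perp\|_2$ and $\|(f(\sigma_i)^{-1}F\mathcal{P}_{LM}^\perp)^\ell\|_2$ rather than your explicit coefficient expansion, and you can safely drop the muddled parenthetical about a $\widetilde{G}^\ell$-form since your ``Concretely, \ldots'' computation is already the right argument.
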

\begin{proof}
  Since $\mathcal{P}_{LM}K$ is full rank, there exists a unique vector ${\bm s}_i \in \mathcal{R}(K)$ as
  \begin{equation*}
    {\bm v}_i
    = \sum_{j=1}^{LM} \alpha_j \mathcal{P}_{LM} {\bm k}_j
    = \mathcal{P}_{LM} \left( \sum_{j=1}^{LM} \alpha_j {\bm k}_j \right)
    =: \mathcal{P}_{LM} {\bm s}_i,
  \end{equation*}
  where $K = [{\bm k}_1, {\bm k}_2, \ldots, {\bm k}_{LM}]$.
  Then, using ${\bm w}_i = (I - \mathcal{P}_{LM}) {\bm s}_i$, we have
  \begin{equation}
    {\bm s}_i = \mathcal{P}_{LM} {\bm s}_i + (I - \mathcal{P}_{LM}) {\bm s}_i = {\bm v}_i + {\bm w}_i.
    \label{eq:svw}
  \end{equation}
  \par
  Let ${\bm y}_i = (1 /\sigma_i) A (1/f(\sigma_i))^\ell F^\ell {\bm s}_i \in \mathcal{R}(A\widehat{S}^{(\ell)})$.
  Then, multiplying the matrix $(1/\sigma_i) A (1/f(\sigma_i))^\ell F^\ell$ to \eqref{eq:svw} from the left-side and considering ${\bm u}_i = (1/\sigma_i) A (1/f(\sigma_i))^\ell F^\ell {\bm v}_i$ and ${\bm w}_i = (I - \mathcal{P}_{LM}) {\bm s}_i = \mathcal{P}_{LM}^\perp {\bm s}_i$, we have
  \begin{equation*}
    {\bm y}_i - {\bm u}_i 
    = \frac{1}{\sigma_i} A \left( \frac{1}{f(\sigma_i)} F \right)^\ell {\bm w}_i
    = \frac{1}{\sigma_i} A \mathcal{P}_{LM}^\perp \left( \frac{1}{f(\sigma_i)} F \mathcal{P}_{LM}^\perp \right)^\ell {\bm w}_i,
  \end{equation*}
  that provides
  \begin{equation*}
    \| {\bm y}_i - {\bm u}_i \|_2
    \leq \frac{1}{\sigma_i} \| A \mathcal{P}_{LM}^\perp \|_2 \left| \! \left| \left( \frac{1}{f(\sigma_i)} F \mathcal{P}_{LM}^\perp \right)^\ell  \right| \! \right|_2 \|{\bm w}_i \|_2.
  \end{equation*}
  Here, using the relationship
  \begin{equation*}
    \min_{ {\bm y}_i \in \mathcal{R}(A \widehat{S}^{(\ell)}) } \| {\bm y}_i - {\bm u}_i \|_2
    = \| (I - \mathcal{P}_U^{(\ell)}) {\bm u}_i \|_2,
  \end{equation*}
  we thus have
  \begin{equation*}
    \| (I - \mathcal{P}_U^{(\ell)}) {\bm u}_i \|_2 
    \leq \| {\bm y}_i - {\bm u}_i \|_2 
    \leq \alpha_i \beta_i \left| \frac{ f(\sigma_{LM+1}) }{ f(\sigma_i) } \right|^\ell.
  \end{equation*}
  \par
  In the same manner, letting ${\bm z}_i = (1/f(\sigma_i))^\ell F^\ell {\bm s}_i \in \mathcal{R}(\widehat{S}^{(\ell)})$ and considering ${\bm v}_i = (1/f(\sigma_i))^\ell F^\ell {\bm v}_i$, we have
  \begin{equation*}
    \| (I - \mathcal{P}_V^{(\ell)}) {\bm v}_i \|_2 
    \leq \| {\bm z}_i - {\bm v}_i \|_2 
    \leq \beta_i \left| \frac{ f(\sigma_{LM+1}) }{ f(\sigma_i) } \right|^\ell,
  \end{equation*}
  that proves Theorem~\ref{thm:error}.
\end{proof}
\par
Theorem~\ref{thm:error} indicates that the accuracy of the proposed method depends on the subspace dimension $LM$.
Given a sufficiently large subspace, i.e., 
\begin{equation*}
  |f(\sigma_{LM+1})/f(\sigma_i)|^\ell \approx 0,
\end{equation*}
the target singular triplets can be obtained accurately, even if some singular values exist outside but near the region.
\subsection{An improvement technique using a nonlinear transformation}
\label{sec:nonlinear}
%
% Theorem~\ref{thm:error} also indicates that if there is a cluster of singular values outside but near the region, then it is difficult to obtain accurate singular triplet even using large $LM$.
Theorem~\ref{thm:error} also indicates that if there is a cluster of singular values outside but near the region, then, to obtain accurate singular triplets, we have to use huge $LM$ that takes into account the number of the clustered singular values even though these are not the target.
This becomes huge computational costs.
Such a situation happens in the case that the singular values are uniformly distributed on the logarithmic scale.
\par
To overcome this difficulty, in this paper, inspired by complex moment-based nonlinear eigensolvers \cite{asakura2009numerical, asakura2010numerical, yokota2013projection, imakura2018block, beyn2012integral, van2016nonlinear}, we consider introducing a nonlinear transformation, $z = g(t)$, with an analytic monotonic increasing function $g$.
Then, we reset
\begin{equation*}
  S_k = \frac{1}{2 \pi {\rm i} } \oint_{\Gamma_t} t^k (g(t) I-A^{\rm T}A)^{-1} V_{\rm in} {\rm d}t,
\end{equation*}
where $\Gamma_t$ is a Jordan curve around $[g^{-1}(a^2),g^{-1}(b^2)]$.
Note that this also holds Theorem~\ref{thm:svd}.
The matrix $\widehat{S}_k$ is approximated by using contour integral as
\begin{equation*}
  \widehat{S}_k = \sum_{j=1}^{N}\psi_j t_j^k (g(t_j) I - A^{\rm T}A)^{-1} V_{\rm in}, 
\end{equation*}
where $(t_j,\psi_j), j = 1, 2, \dots, N$ are the quadrature points and the corresponding weights, respectively.
\par
Although Proposition~\ref{prop:sk=aks} usually does not hold in the case using the nonlinear transform, since we have
\begin{align*}
  S_k 
  &= \frac{1}{2 \pi {\rm i} } \oint_\Gamma \sum_{i=1}^n \frac{t^k}{g(t) - \sigma_i^2} {\bm v}_i {\bm v}_i^{\rm T} V_{\rm in} {\rm d}t \\
  &= \sum_{\sigma_i \in \Omega} (g^{-1}(\sigma_i^2))^k {\bm v}_i {\bm v}_i^{\rm T} V_{\rm in} \\
  &= (g^{-1}(A^{\rm T} A))^k S_0,
\end{align*}
we expect
\begin{align}
  \widehat{S}_k 
  &= \sum_{j=1}^{N} \sum_{i=1}^n \frac{\psi_j t_j^k}{g(t_j) - \sigma_i^2}  {\bm v}_i {\bm v}_i^{\rm T} V_{\rm in} \nonumber \\
  &\approx \sum_{i=1}^n (g^{-1}(\sigma_i))^{2k} \left( \sum_{j=1}^{N} \frac{\psi_j}{g(t_j) - \sigma_i^2} \right)  {\bm v}_i {\bm v}_i^{\rm T} V_{\rm in} \nonumber \\
  &= (g^{-1}(A^{\rm T} A))^k \widehat{S}_{0}.
  \label{eq:sk=aks_nonlinear}
\end{align}
Therefore, letting $f_g(\sigma)$ be a filter function
\begin{equation*}
  f_g(\sigma_i) := \sum_{j=1}^{N} \frac{\psi_j}{g(t_j) - \sigma_i^2},
\end{equation*}
then $\widehat{S}_k^{(\ell)}$ can be rewritten as
\begin{equation*}
  \widehat{S}_k^{(\ell)} 
  \approx V (g^{-1}(\Sigma))^{2k} V^{\rm T} V (f_g(\Sigma))^\ell V^{\rm T} V_{\rm in},
\end{equation*}
that provides
\begin{equation*}
  \widehat{S}^{(\ell)} \approx F_g^\ell K_g
\end{equation*}
with
\begin{equation*}
  F_g = V f_g(\Sigma) V^{\rm T}, \quad
  K_g = [V_{\rm in}, g^{-1}(A^{\rm T}A) V_{\rm in}, \dots, (g^{-1}(A^{\rm T}A))^{M-1} V_{\rm in}].
\end{equation*}
Thus, we have approximately the same error bounds as Theorem~\ref{thm:error} with $f_g(\sigma)$ instead of $f(\sigma)$ as the filter function.
\par
To set the function $g$ as $| f_g(\sigma_{LM+1}) / f_g(\sigma_i) | \ll | f(\sigma_{LM+1}) / f(\sigma_i) |$, the obtained accuracy is expected to be improved.
For example, if the singular values are uniformly distributed on the logarithmic scale, $g(t) = \exp(t)$ is a good choice to achieve small $| f_g(\sigma_{LM+1}) / f_g(\sigma_i) |$.
The algorithm of the proposed method with a nonlinear transform is shown in Algorithm~\ref{alg:ss-svd-nonlinear}.

\begin{algorithm}[t]
  \caption{A block SS--SVD method with nonlinear transformation}
  \label{alg:ss-svd-nonlinear}
  \begin{algorithmic}[1]
    \REQUIRE $L, M, N, \ell \in \mathbb{N}_+, \delta \in \mathbb{R}, V_{\rm in} \in \mathbb{R}^{n \times L}, (t_j, \psi_j)$ for $j = 1, 2, \dots, N/2$
    \ENSURE Approximate singular triplet $(\widehat{\sigma}_i, \widehat{\bm u}_i, \widehat{\bm v}_i)$ for $i = 1, 2, \dots, \widehat{t}$
% \STATE Compute $\widehat{S}_0^{(\nu)} = \sum_{j=1}^N \omega_j g'(t_j) (g(t_j)I-A^{\rm T}A)^{-1}\widehat{S}_0^{(\nu-1)}$ for $\nu = 1, 2, \ldots, \ell-1$
% \STATE Compute $\widehat{S}_k^{(\ell)} = \sum_{j=1}^N \omega_j g'(t_j) t_j^k(g(t_j)I-A^{\rm T}A)^{-1}\widehat{S}_0^{(\ell-1)}$ for $k = 0, 1, \ldots, M-1$, and set $\widehat{S}^{(\ell)} = [\widehat{S}_0^{(\ell)}, \widehat{S}_1^{(\ell)}, \dots, \widehat{S}_{M-1}^{(\ell)}]$
    \STATE Compute $\widehat{S}_0^{(\nu)} = 2 \sum_{j=1}^{N/2} {\rm Re}\left( \psi_j (g(t_j)I-A^{\rm T}A)^{-1}\widehat{S}_0^{(\nu-1)} \right)$ for $\nu = 1, 2, \ldots, \ell-1$
    \STATE Compute $\widehat{S}_k^{(\ell)} = 2 \sum_{j=1}^{N/2} {\rm Re}\left( \psi_j t_j^k(g(t_j)I-A^{\rm T}A)^{-1}\widehat{S}_0^{(\ell-1)} \right)$ \\ for $k = 0, 1, \ldots, M-1$, and set $\widehat{S}^{(\ell)} = [\widehat{S}_0^{(\ell)}, \widehat{S}_1^{(\ell)}, \dots, \widehat{S}_{M-1}^{(\ell)}]$
    \STATE Compute low-rank approx. of $\widehat{S}^{(\ell)}$ using the threshold $\delta$: \\ $\widehat{S}^{(\ell)}= [U_{S1}, U_{S2}] [\Sigma_{S1}, O; O, \Sigma_{S2}] [W_{S1}, W_{S2}]^{\rm T} \approx U_{S1} \Sigma_{S1} W_{S1}^{\rm T}$, and set $\widetilde{V} = U_{S1}$
    \STATE Compute QR factorization of $AU_{S1}$: $[\widetilde{U},B] = {\rm qr}(A \widetilde{V})$
    \STATE Compute singular triplets $(\phi_i, {\bm p}_i, {\bm q}_i)$ of the matrix $B$ and set $(\widehat{\sigma}_i, \widehat{\bm u}_i, \widehat{\bm v}_i) = (\phi_i, \widetilde{U} {\bm p}_i, \widetilde{V} {\bm q}_i)$
  \end{algorithmic}
\end{algorithm}
\subsection{Practical techniques}
Here, we introduce two practical techniques: an efficient residual norm computation and a spurious singular value detection.
\subsubsection{Efficient residual norm computation}
Computation of the residual norms for obtained singular triplets is computationally costly when the problem size is large.
%Here, we consider an efficient computation of the relative residual 2-norm $\|{\bm r}_i\|_2 / \| A^{\rm T}A \|_2 = \| A^{\rm T} A \widehat{\bm v}_i - \widehat{\sigma}_i^2 \widehat{\bm v}_i \|_2 / \| A^{\rm T}A \|_2$.
Here, we consider an efficient computation of the residual 2-norm $\|{\bm r}_i\|_2 = \| A^{\rm T} \widehat{\bm u}_i - \widehat{\sigma}_i \widehat{\bm v}_i \|_2$.
\par
Since the proposed method based on the Galerkin-type two-sided projection provides $B = P \Phi Q^{\rm T}$ \eqref{eq:bsvd} and $A \widetilde{V} = \widetilde{U} B$ \eqref{eq:avub}, we have $A \widetilde{V} Q = \widetilde{U} P \Phi$, i.e., we have $A \widehat{\bm v}_i = \widehat\sigma_i \widehat{\bm u}_i$ for all $i$.
Therefore, the residual 2-norm $\| {\bm r}_i \|_2$ can be replaced as
\begin{equation*}
  \| {\bm r}_i \|_2 
  = \left| \! \left| \frac{1}{\sigma_i} A^{\rm T} A \widehat{\bm v}_i - \widehat{\sigma}_i \widehat{\bm v}_i \right| \! \right|_2
  = \frac{1}{\sigma_i} \| A^{\rm T}A \widehat{\bm v}_i - \widehat\sigma_i^2 \widehat{\bm v}_i \|_2.
\end{equation*}
\par
Let $\widehat{S}^{(\ell)}_+ = [\widehat{S}^{(\ell)}_1, \widehat{S}^{(\ell)}_2, \dots, \widehat{S}_M^{(\ell)}]$.
For the case of Algorithm~\ref{alg:ss-svd}, from Proposition~\ref{prop:sk=aks}, we have $\widehat{S}^{(\ell)}_+ = (A^{\rm T}A) \widehat{S}^{(\ell)}$.
Therefore, the residual 2-norm $\| {\bm r}_i \|$ is rewritten as
\begin{align}
  \| {\bm r}_i \|_2
  &= \frac{1}{\sigma_i} \| A^{\rm T}A U_{S1} {\bm q}_i - \widehat{\sigma}_i^2 U_{S1} {\bm q}_i \|_2 \nonumber \\
  &= \frac{1}{\sigma_i} \left\| A^{\rm T} A \widehat{S}^{(\ell)} V_{S1} \Sigma_{S1}^{-1} {\bm q}_i - \widehat{\sigma}_i^2 U_{S1} {\bm q}_i \right\|_2 \nonumber \\
  &= \frac{1}{\sigma_i} \left\| \widehat{S}_+^{(\ell)} V_{S1} \Sigma_{S1}^{-1} {\bm q}_i - \widehat{\sigma}_i^2 U_{S1} {\bm q}_i \right\|_2,
  \label{eq:residual}
\end{align}
that achieves an efficient residual 2-norm computation without a matrix product for $A$, since the matrix $\widehat{S}^{(\ell)}_+$ is efficiently obtained by contour integral as well as \eqref{eq:iter1} and \eqref{eq:iter2}.
\par
Next, we consider the case of using the nonlinear transformation.
Assuming that the relative residual of symmetric eigenvalue problem is almost invariant to nonlinear transform,
\begin{equation*}
  \frac{ \| A^{\rm T}A \widehat{\bm v}_i - \widehat{\sigma}_i^2 \widehat{\bm v}_i \|_2 }{ \| A^{\rm T}A \|_2 }
  \approx \frac{ \| g^{-1}(A^{\rm T}A) \widehat{\bm v}_i - g^{-1}(\widehat{\sigma}_i^2) \widehat{\bm v}_i \|_2 }{ \| g^{-1}(A^{\rm T}A) \|_2 },
\end{equation*}
we have
\begin{equation}
  \| {\bm r}_i \|_2 \approx \frac{1}{\sigma_i} \| g^{-1}(A^{\rm T}A) \widehat{\bm v}_i - g^{-1}(\widehat{\sigma}_i^2) \widehat{\bm v}_i \|_2 \frac{ \| A^{\rm T}A \|_2 }{ \| g^{-1}(A^{\rm T}A) \|_2 }.
  \label{eq:res}
\end{equation}
Here, from \eqref{eq:sk=aks_nonlinear}, we have $\widehat{S}_+^{(\ell)} \approx g^{-1}( A^{\rm T}A ) \widehat{S}^{(\ell)}$.
Then, the 1st part of \eqref{eq:res} can be approximated as
\begin{align*}
  &\frac{1}{\sigma_i} \| g^{-1}(A^{\rm T}A) \widehat{\bm v}_i - g^{-1}(\widehat{\sigma}_i^2) \widehat{\bm v}_i \|_2 \nonumber \\
  &\quad = \frac{1}{\sigma_i} \| g^{-1}(A^{\rm T}A) U_{S1} {\bm q}_i - g^{-1}(\widehat{\sigma}_i^2) U_{S1} {\bm q}_i \|_2 \nonumber \\
  &\quad = \frac{1}{\sigma_i} \left\| g^{-1}(A^{\rm T} A) \widehat{S}^{(\ell)} V_{S1} \Sigma_{S1}^{-1} {\bm q}_i - g^{-1}(\widehat{\sigma}_i^2) U_{S1} {\bm q}_i \right\|_2 \nonumber \\
  &\quad \approx \frac{1}{\sigma_i} \left\| \widehat{S}_+^{(\ell)} V_{S1} \Sigma_{S1}^{-1} {\bm q}_i - g^{-1}(\widehat{\sigma}_i^2) U_{S1} {\bm q}_i \right\|_2 \nonumber \\
  &=: \| \widetilde{\bm r}_i \|_2.
\end{align*}
Since the 2nd part of \eqref{eq:res} is constant for $i$, we have
\begin{equation*}
  \frac{ \| A^{\rm T}A \|_2 }{ \| g^{-1}(A^{\rm T}A) \|_2 }
  \approx \frac{ \| {\bm r}_{i'} \|_2 }{ \| \widetilde{\bm r}_{i'} \|_2 },  \quad
  i' \in \{1, 2, \dots, \widehat{t}\}.
\end{equation*}
As a result, we can efficiently approximate all $\|{\bm r}_i\|_2$ by 
\begin{equation}
  \| {\bm r}_i\|_2 \approx \mu \| \widetilde{\bm r}_i \|_2, \quad
  \mu = \frac{ \| {\bm r}_{i'} \|_2 }{ \| \widetilde{\bm r}_{i'} \|_2 },
  \label{eq:res_nonlinear}
\end{equation}
with only one exact residual 2-norm $\| {\bm r}_{i'} \|_2 = \| A^{\rm T} \widehat{\bm u}_{i'} - \widehat\sigma_{i'} \widehat{\bm v}_{i'} \|_2$.
\subsubsection{Spurious singular value detection}
Similarly to other methods, the proposed method may compute spurious singular values in the target region.
In the proposed method, both subspaces $\mathcal{R}(AU_{S1})$ and $\mathcal{R}(U_{S1})$ are constructed from the matrix $U_{S1}$ obtained by a low-rank approximation \eqref{eq:low-rank} of $\widehat{S}$.
Here, we focus on this low-rank approximation and introduce a technique for spurious singular value detection.
\par
Let $U_{S1}$ be split as $U_{S1} = [U_{S1}^{(1)}, U_{S1}^{(2)}]$, where $U_{S1}^{(1)}$ and $U_{S1}^{(2)}$ correspond to large and small singular values in $\Sigma_{S1}$, respectively.
Then, since $\widetilde{V} = U_{S1}$, the approximation of right singular vectors ${\bm v}_i = \widetilde{V}{\bm q}_i$ is replaced as
\begin{equation*}
  {\bm v}_i = U_{S1} {\bm q}_i = [U_{S1}^{(1)}, U_{S1}^{(2)}] 
  \left[
    \begin{array}{c}
      {\bm q}_i^{(1)} \\ {\bm q}_i^{(2)}
    \end{array}
  \right].
\end{equation*}
From Theorem~\ref{thm:svd}, we expect ${\rm span}\{ {\bm v}_i | \sigma_i \in \Omega \} \approx \mathcal{R}(U_{S1}^{(1)})$.
Therefore, the magnitude of the elements of ${\bm q}_i^{(1)}$ is expected to be much larger than that of ${\bm q}_i^{(2)}$.
\par
Based on this concept, we detect spurious singular values using the following index:
\begin{equation*}
  \tau_i 
  := \frac{ \| {\bm q}_i \|_2^2 }{ \| {\bm q}_i \|_{\Sigma_{S1}^{-1}}^2}
  = \frac{ {\bm q}_i^{\rm T}{\bm q}_i }{ {\bm q}_i^{\rm T} {\Sigma_{S1}^{-1}} {\bm q}_i}.
\end{equation*}
If $\tau_i$ is smaller than some constant $\varepsilon$, we treat the eigenpair as a spurious singular values.
\section{Numerical experiments}
\label{sec:experiment}
Here, we evaluate the performance of the proposed method (Algorithms~\ref{alg:ss-svd} and \ref{alg:ss-svd-nonlinear}).
For Algorithm~\ref{alg:ss-svd-nonlinear}, we set $g$ as $z = g(t) = \exp(t)$.
\par
Let the quadrature points $z_j$ and $t_j$ be on an ellipse with center $\gamma$, major axis $\rho$ and aspect ratio $\alpha$, i.e.,
\begin{equation*}
  z_j = t_j = \gamma + \rho \left(\cos(\theta_j) + \alpha {\rm i} \sin(\theta_j)\right), \quad
  \theta_j = \frac{2 \pi}{N} \left(j-\frac{1}{2}\right), \quad
  j = 1, 2, \ldots, N.
\end{equation*}
The corresponding weights are set as
\begin{equation*}
  \omega_j = \frac{\rho}{N} \left(\alpha\cos(\theta_j) + {\rm i} \sin(\theta_j)\right), \quad
  \psi_j = \frac{\rho}{N} \exp(t_j) \left(\alpha\cos(\theta_j) + {\rm i} \sin(\theta_j)\right).
\end{equation*}
We set $(\gamma,\rho,\alpha) = ((a^2 + b^2)/2, (b^2 - a^2) / 2, 0.1)$ for Algorithm~\ref{alg:ss-svd} and $(\gamma,\rho,\alpha) = (\log(a) + \log(b),  \log(b) - \log(a), 0.1)$ for Algorithm~\ref{alg:ss-svd-nonlinear}.
\par
In these numerical experiments, the algorithms were implemented in MATLAB R2019a.
The input matrix $V_{\rm in}$ was set as a random matrix generated by the Mersenne Twister in MATLAB and each linear system was solved using the MATLAB command ``$\backslash$''.
\subsection{Experiment I: filter function}
Here, we compare the filter functions $f(\sigma)$ and $f_g(\sigma)$ for two cases.
Figure~\ref{fig:filter} shows that the values of the filter functions $|f(\sigma)|$ and $|f_g(\sigma)|$ for $[a,b] = [0.8,1.2]$ and $[10^{-3}, 10^{-1}]$.
\begin{figure}[t]
  \centering
  \begin{tabular}{cc}
    \begin{minipage}[t]{0.45\hsize}
      \centering
      \includegraphics[bb = 0 0 360 216, scale=0.45]{./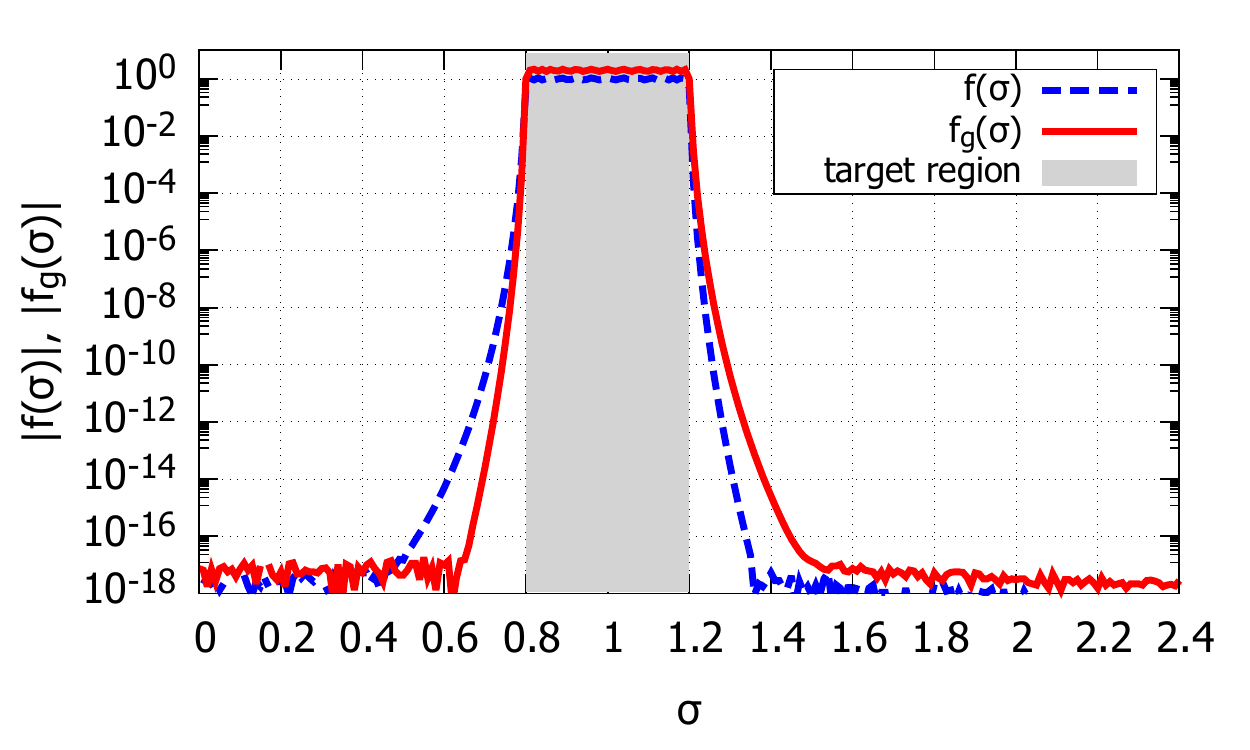} \\
      (a) $[a,b] = [0.8,1.2]$.
    \end{minipage}
    \begin{minipage}[t]{0.45\hsize}
      \centering
      \includegraphics[bb = 0 0 360 216, scale=0.45]{./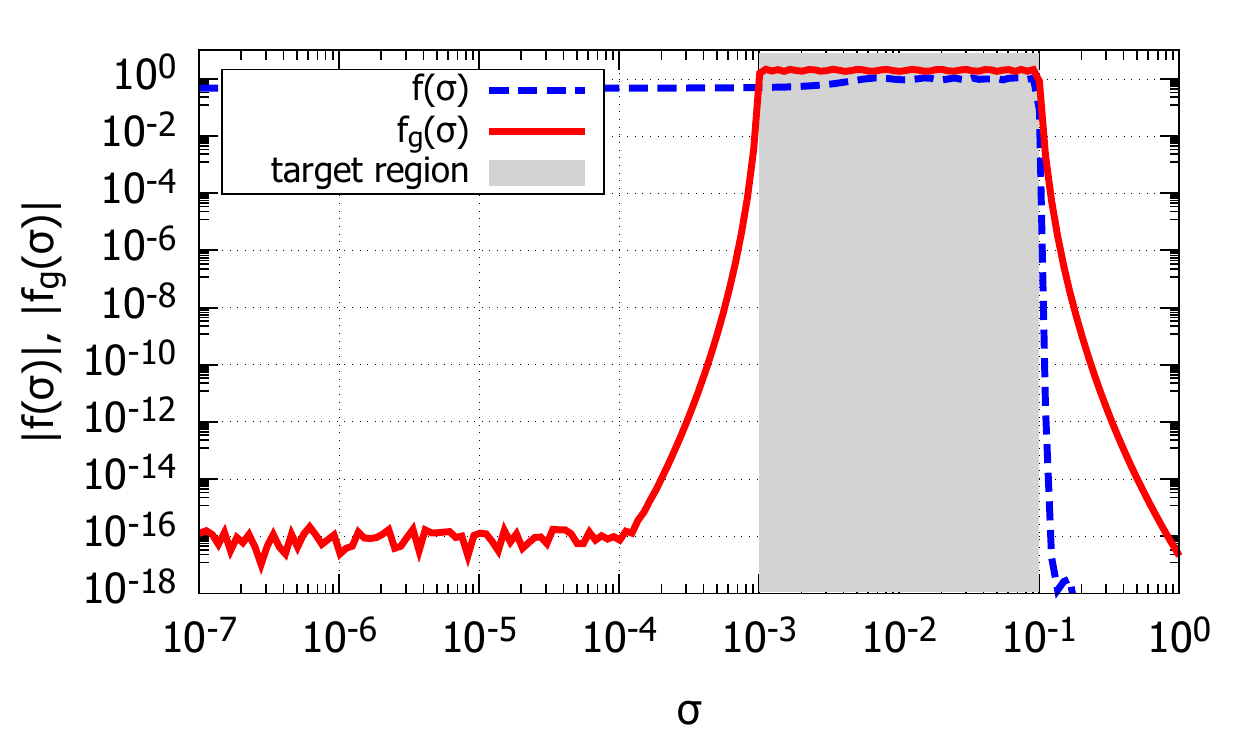} \\
      (b) $[a,b] = [10^{-3},10^{-1}]$.
    \end{minipage}
  \end{tabular}
  \caption{The values of $|f(\sigma)|$ and $|f_g(\sigma)|$.}
  \label{fig:filter}
\end{figure}
%
% \begin{figure}[t]
%   \centering
%   \subfloat[{ $[a,b] = [0.8,1.2]$ } ]{
%     \includegraphics[bb = 0 0 360 216, scale=0.45]{./filter_case1_linear.pdf}
%   }
%   \subfloat[{ $[a,b] = [10^{-3},10^{-1}]$ } ]{
%     \includegraphics[bb = 0 0 360 216, scale=0.45]{./filter_case2_log.pdf}
%   }
%   \caption{
%     The values of $|f(\sigma)|$ and $|f_g(\sigma)|$.
%   }
%   \label{fig:filter}
% \end{figure}
%
\par
As shown in Figure~\ref{fig:filter}(a), in the case that the target region is around 1, there is no large difference between the filter functions $f(\sigma)$ and $f_g(\sigma)$.
On the other hand, as shown in Figure~\ref{fig:filter}(b), in the case that the target region is cross to 0, there is large difference.
For the region of $\sigma > b$, both $|f(\sigma)|$ and $|f_g(\sigma)|$ decrease with increasing $\sigma$, specifically $|f(\sigma)|$ shows rapid decreasing.
For the region of $0 < \sigma < a$, $|f(\sigma)|$ shows large value $|f(\sigma)| \approx 0.5$.
Instead, $|f_g(\sigma)|$ drastically decrease.
This result indicates that, a cluster of singular value in the region $[0,10^{-4}]$ causes a negative effect on the accuracy of the SS-SVD based on $f(\sigma)$, but not the accuracy of the SS-SVD with nonlinear transform based on $f_g(\sigma)$.
\subsection{Experiment II: model problem}
In this subsection, we compare the accuracy of four methods:
\begin{itemize}
  \item Naive: The block SS--CAA method \cite{imakura2017block} via the symmetric eigenvalue problem \eqref{eq:sep};
  \item Naive with NT: Naive with the nonlinear trasnform;
  \item SS--SVD: The proposed method (Algorithm~\ref{alg:ss-svd});
  \item SS--SVD with NT: The proposed method with the nonlinear trainsform (Algorithm~\ref{alg:ss-svd-nonlinear}),
\end{itemize}
using two small model problems of size $m=1000, n = 200$.
For the model problem~1, we set
\begin{equation}
  \Sigma = {\rm diag}(0.005, 0.015, \ldots, 1.995) \in \mathbb{R}^{200 \times 200}
  \label{eq:model1}
\end{equation}
such that singular values are uniformly distributed, and for the model problem~2, we set
\begin{equation}
  \Sigma = {\rm diag}(10^{-10.0}, 10^{-9.95}, \ldots, 10^{-0.05}) \in \mathbb{R}^{200 \times 200}
  \label{eq:model2}
\end{equation}
such that singular values are uniformly distributed on the logarithmic scale.
Then, the matrix $A$ is constructed as $A = U \Sigma V^{\rm T}$ with orthogonal matrices $U$ and $V$ constructed from random matrices.
\begin{figure}[t]
  \centering
  \includegraphics[bb = 0 0 360 216, scale=0.45]{./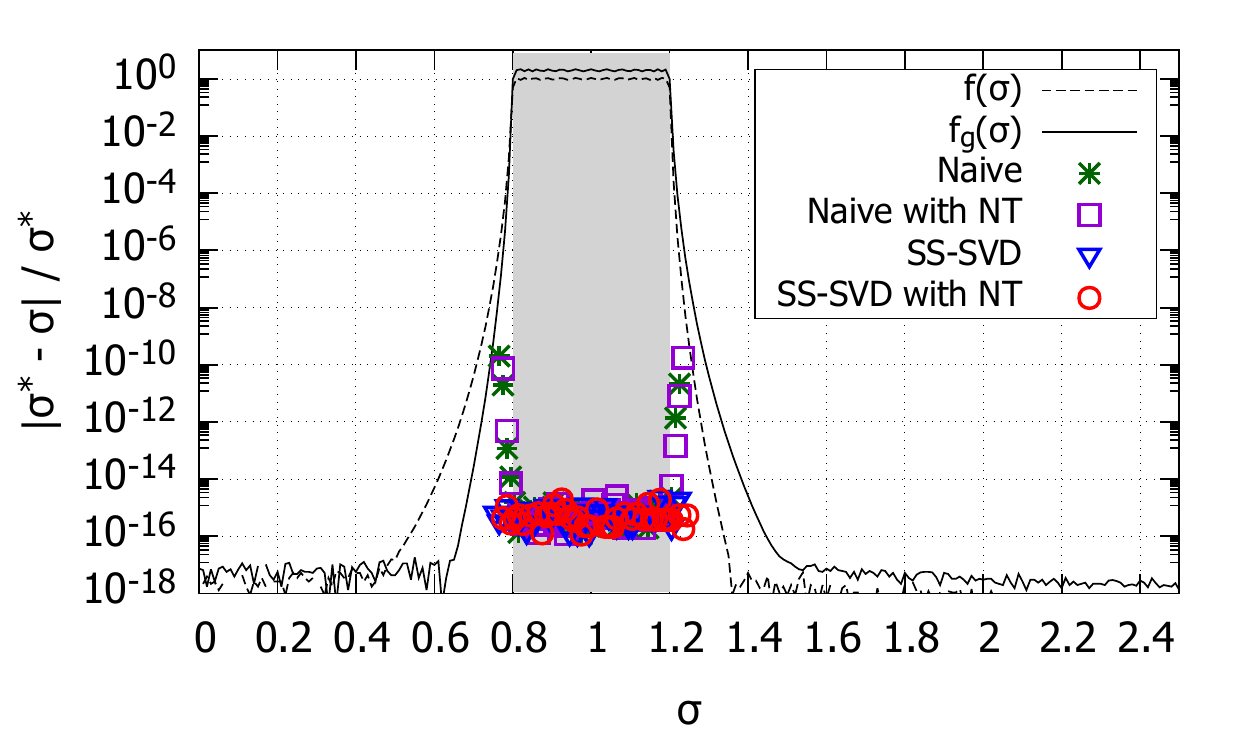} \quad
  \includegraphics[bb = 0 0 360 216, scale=0.45]{./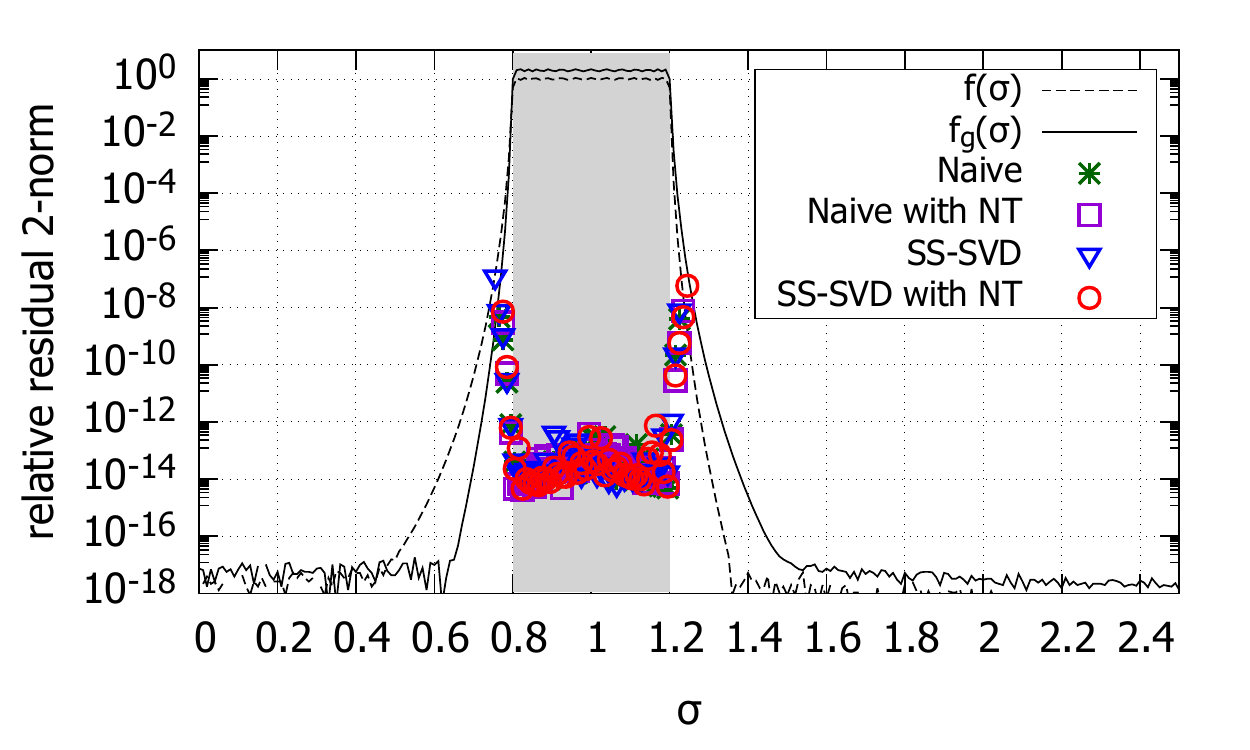} \\
  (a) The model problem 1 \eqref{eq:model1}. \\
  \includegraphics[bb = 0 0 360 216, scale=0.45]{./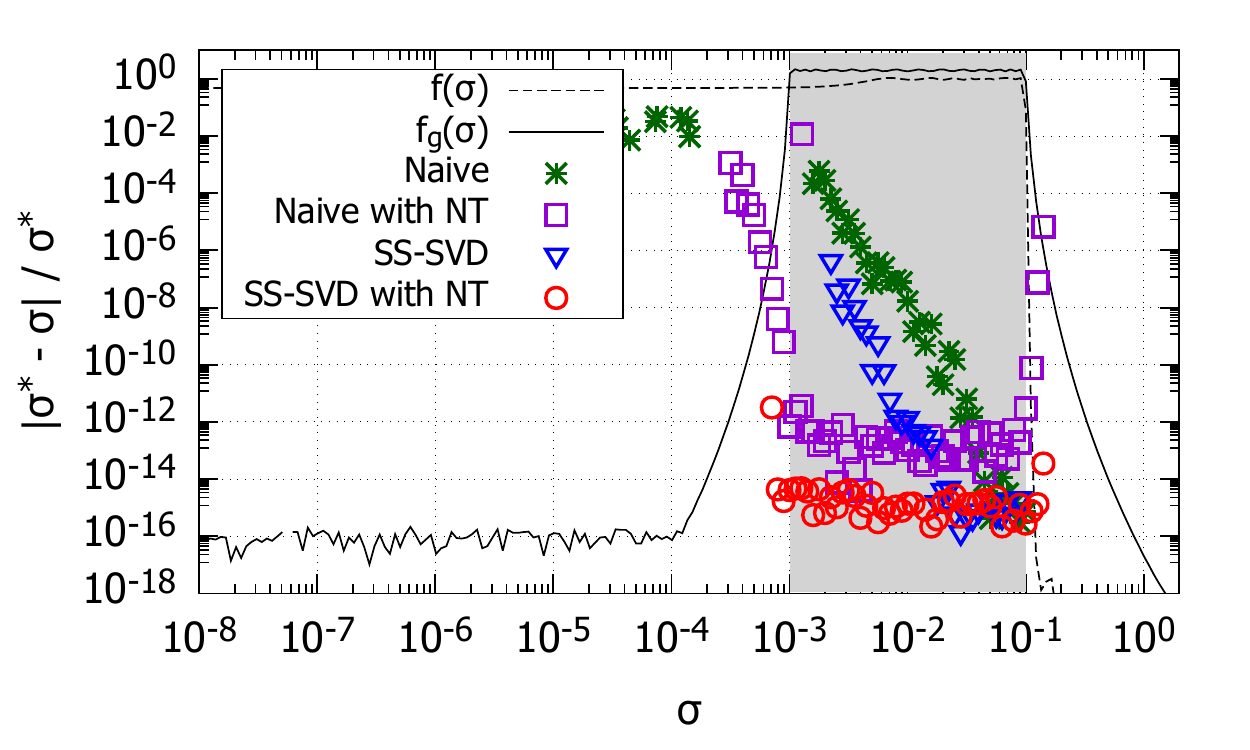} \quad
  \includegraphics[bb = 0 0 360 216, scale=0.45]{./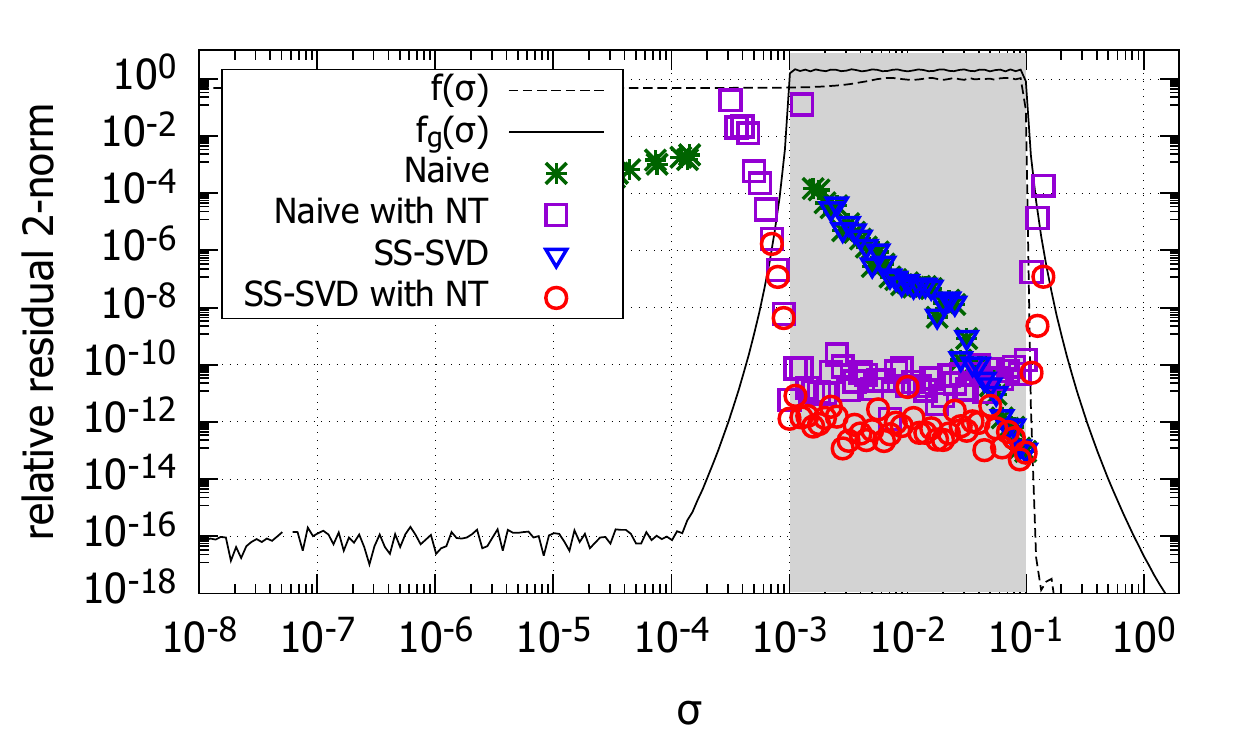} \\
  (b) The model problem 2 \eqref{eq:model2}.
  \caption{Relative error $|\sigma_i^\ast - \sigma_i | / \sigma_i^\ast$ of singular values and residual 2-norm $\| {\bm r}_i \|_2$.}
  \label{fig:ex2}
\end{figure}
\begin{figure}[t]
  \centering
  \begin{tabular}{cc}
    \begin{minipage}[t]{0.45\hsize}
      \centering
      \includegraphics[bb = 0 0 360 216, scale=0.45]{./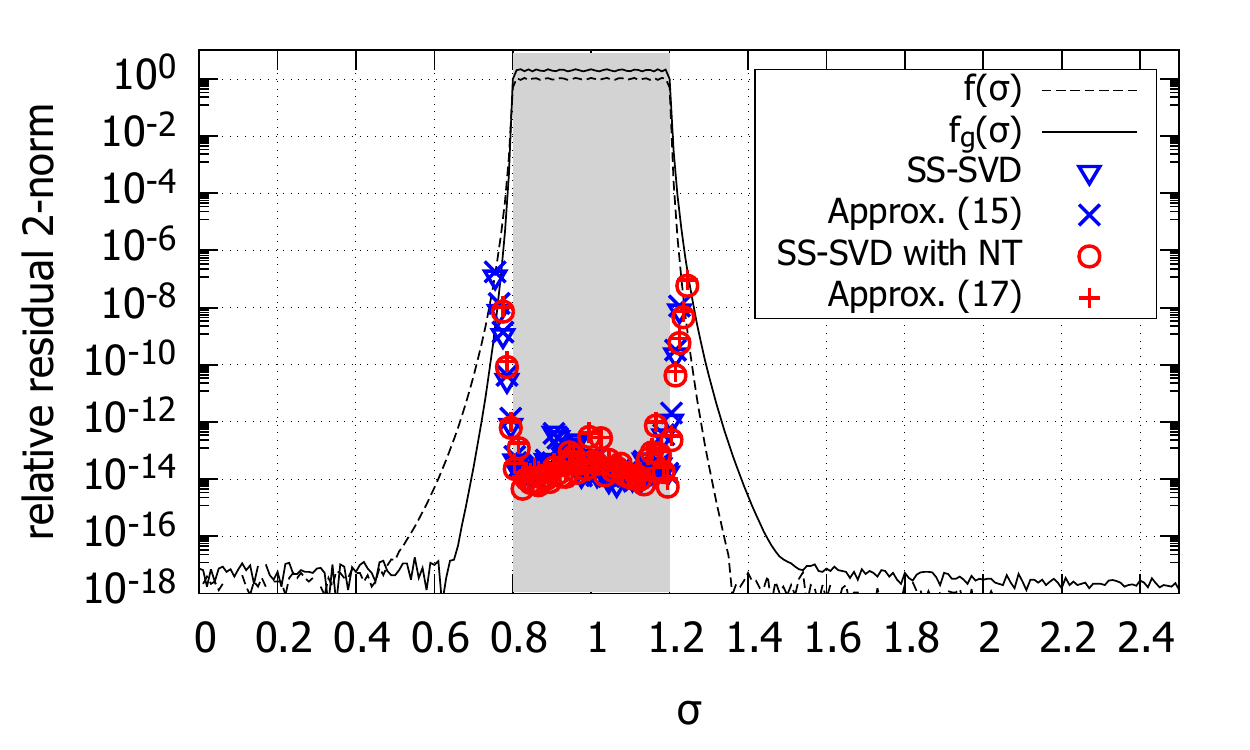} \\
      (a) The model problem 1 \eqref{eq:model1}.
    \end{minipage}
    \begin{minipage}[t]{0.45\hsize}
      \centering
      \includegraphics[bb = 0 0 360 216, scale=0.45]{./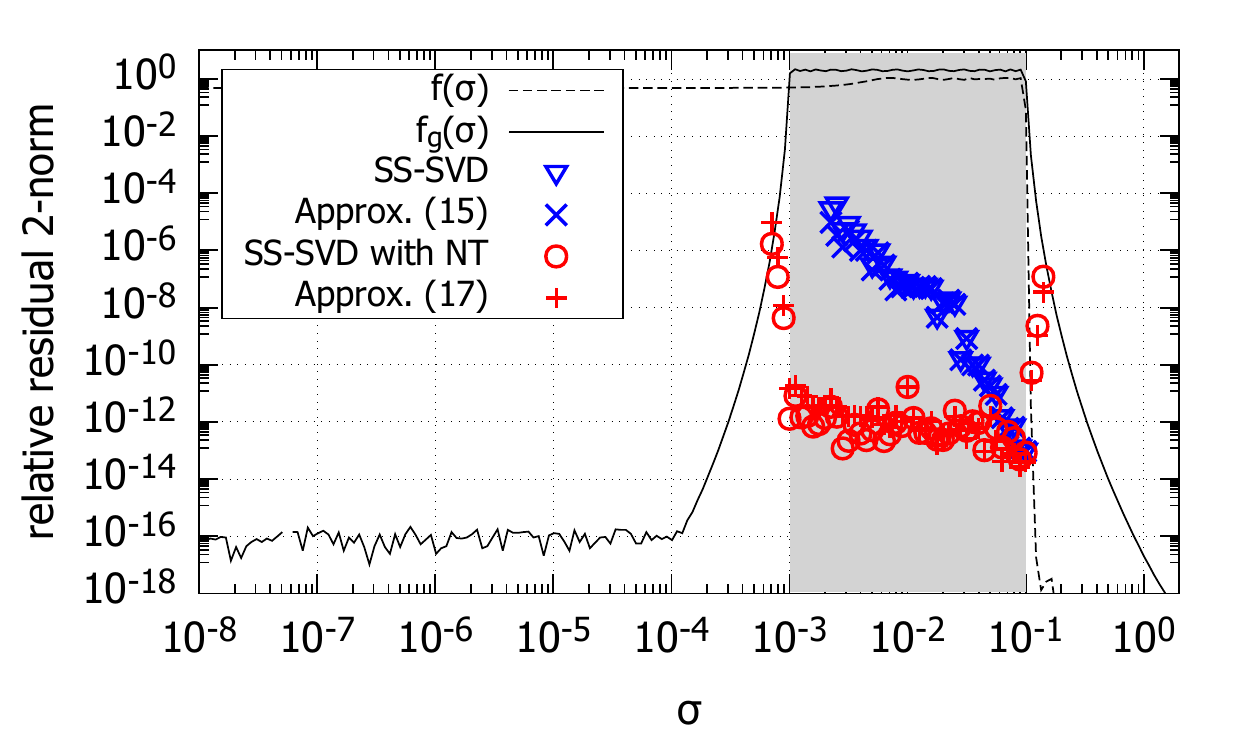} \\
      (b) The model problem 2 \eqref{eq:model2}.
    \end{minipage}
  \end{tabular}
  \caption{Residual norm and its approximation.}
  \label{fig:ex2_est}
\end{figure}
%
% \begin{figure}[!t]
%   \centering
%   \subfloat[The model problem 1 \eqref{eq:model1}]{
%     \includegraphics[bb = 0 0 360 216, scale=0.45]{./ex2_case1.pdf}
%     \includegraphics[bb = 0 0 360 216, scale=0.45]{./ex2_case1v.pdf}
%   }\\
%   \subfloat[The model problem 2 \eqref{eq:model2}]{
%     \includegraphics[bb = 0 0 360 216, scale=0.45]{./ex2_case2.pdf}
%     \includegraphics[bb = 0 0 360 216, scale=0.45]{./ex2_case2v.pdf}
%   }
%   \caption{
%     Relative error $|\sigma_i^\ast - \sigma_i | / \sigma_i^\ast$ of singular values and residual 2-norm $\| {\bm r}_i \|_2$.
%   }
%   \label{fig:ex2}
% \end{figure}
% %
% \begin{figure}[!t]
%   \centering
%   \subfloat[The model problem 1 \eqref{eq:model1}]{
%     \includegraphics[bb = 0 0 360 216, scale=0.45]{./ex2_case1_est.pdf}
%   }
%   \subfloat[The model problem 2 \eqref{eq:model2}]{
%     \includegraphics[bb = 0 0 360 216, scale=0.45]{./ex2_case2_est.pdf}
%   }
%   \caption{
%     Residual norm and its approximation.
%   }
%   \label{fig:ex2_est}
% \end{figure}
%
\par
For the model problem 1, we compute 40 singular triplets such that $\sigma_i \in [0.8,1.2]$ and for the model problem 2, we compute 40 singular triplets such that $\sigma_i \in [10^{-3},10^{-1}]$.
We set the parameters as $(L,M,N,\ell,\delta) = (20,4,32,1$, $10^{-20})$.
\par
In Figure~\ref{fig:ex2}, we show relative error of singular value $| \sigma_i^\ast - \widehat\sigma_i | / \sigma_i^\ast$, where $\sigma_i^\ast$ is the exact singular value, and residual 2-norm $\| {\bm r}_i \|_2 = \| A^{\rm T} \widehat{\bm u}_i - \widehat\sigma_i \widehat{\bm v}_i \|_2$.
In the case of the model problem 1 \eqref{eq:model1} whose singular values are uniformly distributed, as shown in Figure~\ref{fig:ex2}(a), all methods show almost the same accuracy both for the relative error and residual 2-norm.
On the other hand, in the case of the model problem 2 \eqref{eq:model2} whose singular values are uniformly distributed on the logarithmic scale, as shown in Figure~\ref{fig:ex2}(b), the nonlinear transformation shows drastically improves the accuracy.
Specifically, the SS-SVD with the nonlinear transformation shows the best results both for relative error and residual 2-norm.
\par
In Figure~\ref{fig:ex2_est}, we show the residual 2-norm and its estimations \eqref{eq:residual} and \eqref{eq:res_nonlinear} for the proposed methods.
We observe that the both approximations well estimated the exact residual 2-norms for both problems.
\subsection{Experiment III: real-world problem}
We evaluate the computation time of the proposed method compared with that of MATLAB functions \verb+svd+ for computing all the singular triplets and \verb+svds+ for computing partial singular triplets.
We used a $60000 \times 784$ sparse matrix obtained from a 10-class classification of handwritten digits (MNIST) \cite{lecun1998mnist}.
The average nonzero elements per row is 149.9.
We normalized the matrix as the largest singular value is 1.
For \verb+svds+, we computed the largest $10, 20, \dots, 160$ singular triplets.
For the proposed method, the target interval, the number $t$ of the target singular triplets and parameters $L, M$ are shown in Table~\ref{table:ex3}.
Here, we fix $M=4$ and set $L$ as $LM \approx 3t$.
We also set $(N,\ell,\delta) = (32, 1, 10^{-20})$.
\begin{table}[t]
  \caption{The target region $[a,b]$, the number $t$ of the target singular triplets and parameter setting for the proposed method.}
  \label{table:ex3}
\begin{center}
\begin{tabular}{rrrrrcrrrrr} 
\toprule
\multicolumn{4}{c}{Exterior} & & \multicolumn{4}{c}{Interior} \\ \cmidrule{1-4} \cmidrule{6-9}
\multicolumn{1}{c}{$[a,b]$} & \multicolumn{1}{c}{$t$} & \multicolumn{1}{c}{$L$} & \multicolumn{1}{c}{$M$} &  & \multicolumn{1}{c}{$[a,b]$} & \multicolumn{1}{c}{$t$} & \multicolumn{1}{c}{$L$} & \multicolumn{1}{c}{$M$} \\ \cmidrule{1-2} \cmidrule{3-4} \cmidrule{6-7} \cmidrule{8-9}
$[0.120, 1.01]$ &  21 & 15 & 4 &  & $[0.060, 0.08]$ &  17 & 15 & 4 \\
$[0.080, 1.01]$ &  40 & 30 & 4 &  & $[0.045, 0.08]$ &  39 & 30 & 4 \\
$[0.045, 1.01]$ &  79 & 60 & 4 &  & $[0.030, 0.08]$ &  85 & 60 & 4 \\
$[0.025, 1.01]$ & 153 &120 & 4 &  & $[0.020, 0.08]$ & 158 &120 & 4 \\
\bottomrule
\end{tabular}
\end{center}
\end{table}
\begin{figure}[t]
  \centering
  \includegraphics[bb = 0 0 360 216, scale=0.45]{./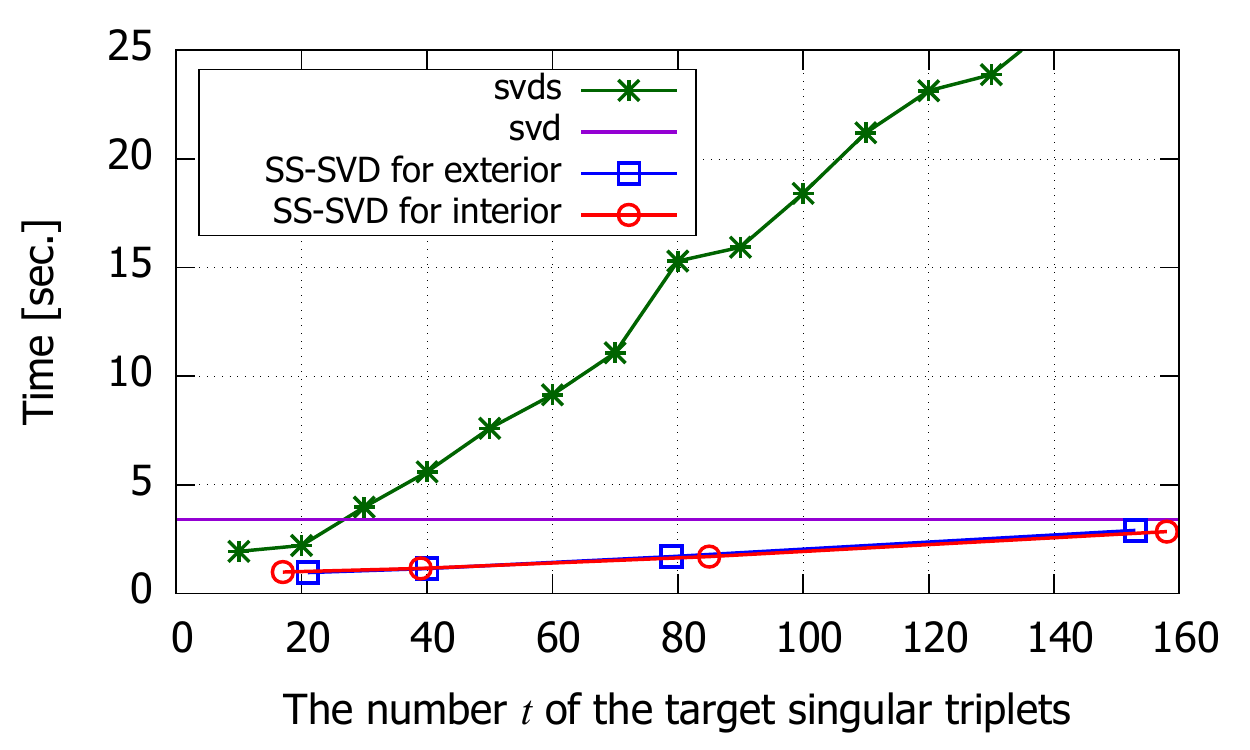}
  \caption{The computation time v.s. the number of the target singular triplets.}
  \label{fig:ex3}
\end{figure}

\begin{table}[t]
\caption{Breakdown of the computation time and accuracy of the proposed method.}
\label{table:ex3_result}
\begin{center}
\begin{tabular}{cccccccccc} 
\multicolumn{10}{c}{Exterior} \\
\toprule
\multicolumn{1}{c}{region} & & \multicolumn{5}{c}{Computation time [sec.]} &  & \multicolumn{2}{c}{accuracy} \\ \cmidrule{3-7} \cmidrule{9-10}
\multicolumn{1}{c}{$[a,b]$} & & \multicolumn{1}{c}{Steps 1--2} & \multicolumn{1}{c}{3} & \multicolumn{1}{c}{4} & \multicolumn{1}{c}{5} & \multicolumn{1}{c}{Total} &  & \multicolumn{1}{c}{error} & \multicolumn{1}{c}{residual} \\ \cmidrule{1-1} \cmidrule{3-7} \cmidrule{9-10}
$[0.120, 1.01]$ &  & 0.809 & 0.002  & 0.144  & 0.006  & 0.962  &  & 1.67E-15 & 1.57E-13 \\
$[0.080, 1.01]$ &  & 0.842 & 0.005  & 0.299  & 0.017  & 1.146  &  & 1.70E-15 & 1.90E-14 \\
$[0.045, 1.01]$ &  & 0.886 & 0.015  & 0.792  & 0.049  & 1.693  &  & 2.94E-15 & 1.03E-14 \\
$[0.025, 1.01]$ &  & 0.997 & 0.054  & 1.847  & 0.178  & 2.898  &  & 2.48E-15 & 3.90E-15 \\
\bottomrule
\\
\multicolumn{10}{c}{Interior} \\
\toprule
\multicolumn{1}{c}{region} & & \multicolumn{5}{c}{Computation time [sec.]} &  & \multicolumn{2}{c}{accuracy} \\ \cmidrule{3-7} \cmidrule{9-10}
\multicolumn{1}{c}{$[a,b]$} & & \multicolumn{1}{c}{Steps 1--2} & \multicolumn{1}{c}{3} & \multicolumn{1}{c}{4} & \multicolumn{1}{c}{5} & \multicolumn{1}{c}{Total} &  & \multicolumn{1}{c}{error} & \multicolumn{1}{c}{residual} \\ \cmidrule{1-1} \cmidrule{3-7} \cmidrule{9-10}
$[0.060, 0.08]$ &  & 0.830 & 0.002  & 0.150  & 0.006  & 0.983 &  & 1.09E-15 & 8.26E-14 \\
$[0.045, 0.08]$ &  & 0.858 & 0.005  & 0.286  & 0.017  & 1.148 &  & 2.62E-15 & 8.99E-14 \\
$[0.030, 0.08]$ &  & 0.886 & 0.015  & 0.795  & 0.047  & 1.695 &  & 2.18E-15 & 5.02E-13 \\
$[0.020, 0.08]$ &  & 1.024 & 0.056  & 1.762  & 0.184  & 2.842 &  & 2.48E-15 & 3.99E-16 \\
\bottomrule
\end{tabular}
\end{center}
\end{table}
\par
The computation time of all methods are shown in Figure~\ref{fig:ex3}.
The breakdown of the computation time and accuracy of the proposed method are also shown in Table~\ref{table:ex3_result}.
As shown in Figure~\ref{fig:ex3}, the computation time of \verb+svds+ increases with increasing the number $t$ of the target singular values.
Then, in this experiment, \verb+svd+ is faster than \verb+svds+ when $t > 30$.
Instead, the proposed method is much faster than \verb+svds+ and \verb+svd+ for both exterior and interior cases, although the computation time, specifically Step~4, of the proposed method increases with increasing $t$ and $L$.
\section{Conclusions}
\label{sec:conclusion}
Based on the concept of the complex moment-based eigensolvers, in this paper, we proposed a novel complex moment-based method to compute interior singular triplets \eqref{eq:psvd}.
We also analysed error bounds of the proposed method and proposed an improvement technique using a nonlinear transformation to improve accuracy of the proposed method.
The proposed method has high parallel efficiency as the complex moment-based parallel eigensolvers.
From the numerical experiments, the proposed complex moment-based method with the nonlinear transformation can compute accurate singular triplets for both exterior and interior problems.
The computation time of the proposed method is much faster than \verb+svds+ and \verb+svd+.
\par
In the future, we will evaluate the parallel performance of the proposed method for more large real-world problems.
\bibliography{mybibfile}

\begin{thebibliography}{10}
\expandafter\ifx\csname url\endcsname\relax
  \def\url#1{\texttt{#1}}\fi
\expandafter\ifx\csname urlprefix\endcsname\relax\def\urlprefix{URL }\fi

\bibitem{asakura2009numerical}
J.~Asakura, T.~Sakurai, H.~Tadano, T.~Ikegami, K.~Kimura, A numerical method
  for nonlinear eigenvalue problems using contour integrals, JSIAM Letters 1
  (2009) 52--55.

\bibitem{asakura2010numerical}
J.~Asakura, T.~Sakurai, H.~Tadano, T.~Ikegami, K.~Kimura, A numerical method
  for polynomial eigenvalue problems using contour integral, Japan Journal of
  Industrial and Applied Mathematics 27~(1) (2010) 73--90.

\bibitem{beyn2012integral}
W.-J. Beyn, An integral method for solving nonlinear eigenvalue problems,
  Linear Algebra and its Applications 436~(10) (2012) 3839--3863.

\bibitem{chatelin1993eigenvalues}
F.~Chatelin, Eigenvalues of Matrices, Wiley, Chichester, 1993.

\bibitem{FEAST}
{FEAST Eigenvalue Solver}, \url{http://www.ecs.umass.edu/~polizzi/feast/}.

\bibitem{golub1996matrix}
G.~H. Golub, C.~F. Van~Loan, Matrix Computations, 3rd ed., The Johns Hopkins
  University Press, 1996.

\bibitem{guttel2015zolotarev}
S.~G\"{u}ttel, E.~Polizzi, P.~T.~P. Tang, G.~Viaud, Zolotarev quadrature rules
  and load balancing for the {FEAST} eigensolver, SIAM Journal on Scientific
  Computing 37~(4) (2015) A2100--A2122.

\bibitem{hochstenbach2001jacobi}
M.~E. Hochstenbach, A {Jacobi--Davidson} type {SVD} method, SIAM J. Sci.
  Comput. 23 (2001) 606--628.

\bibitem{ikegami2010contour}
T.~Ikegami, T.~Sakurai, Contour integral eigensolver for {non-Hermitian}
  systems: a {Rayleigh-Ritz-type} approach, Taiwanese Journal of Mathematics
  (2010) 825--837.

\bibitem{ikegami2010filter}
T.~Ikegami, T.~Sakurai, U.~Nagashima, A filter diagonalization for generalized
  eigenvalue problems based on the {Sakurai--Sugiura} projection method,
  Journal of Computational and Applied Mathematics 233~(8) (2010) 1927--1936.

\bibitem{imakura2014block}
A.~Imakura, L.~Du, T.~Sakurai, A block {Arnoldi-type} contour integral spectral
  projection method for solving generalized eigenvalue problems, Applied
  Mathematics Letters 32 (2014) 22--27.

\bibitem{imakura2016error}
A.~Imakura, L.~Du, T.~Sakurai, Error bounds of {Rayleigh--Ritz} type contour
  integral-based eigensolver for solving generalized eigenvalue problems,
  Numer. Alg. 71 (2016) 103--120.

\bibitem{imakura2016relationships}
A.~Imakura, L.~Du, T.~Sakurai, Relationships among contour integral-based
  methods for solving generalized eigenvalue problems, Japan Journal of
  Industrial and Applied Mathematics 33~(3) (2016) 721--750.

\bibitem{imakura2017structure}
A.~Imakura, Y.~Futamura, T.~Sakurai, Structure-preserving technique in the
  block {SS--Hankel} method for solving {Hermitian} generalized eigenvalue
  problems, in: International Conference on Parallel Processing and Applied
  Mathematics, Springer, 2017.

\bibitem{imakura2019complex}
A.~Imakura, M.~Matsuda, X.~Ye, T.~Sakurai, Complex moment-based supervised
  eigenmap for dimensionality reduction, in: Proceedings of the AAAI Conference
  on Artificial Intelligence, vol.~33, 2019.

\bibitem{imakura2017block}
A.~Imakura, T.~Sakurai, Block {Krylov-type} complex moment-based eigensolvers
  for solving generalized eigenvalue problems, Numerical Algorithms 75~(2)
  (2017) 413--433.

\bibitem{imakura2018block}
A.~Imakura, T.~Sakurai, {Block SS-CAA}: A complex moment-based parallel
  nonlinear eigensolver using the block communication-avoiding {Arnoldi}
  procedure, Parallel Computing (2018) 34--48.

\bibitem{iwase2017efficient}
S.~Iwase, Y.~Futamura, A.~Imakura, T.~Sakurai, T.~Ono, Efficient and scalable
  calculation of complex band structure using {Sakurai-Sugiura} method, in:
  Proceedings of the International Conference for High Performance Computing,
  Networking, Storage and Analysis, ACM, 2017.

\bibitem{kestyn2016pfeast}
J.~Kestyn, V.~Kalantzis, E.~Polizzi, Y.~Saad, {PFEAST}: a high performance
  sparse eigenvalue solver using distributed-memory linear solvers, in: High
  Performance Computing, Networking, Storage and Analysis, SC16: International
  Conference for, IEEE, 2016.

\bibitem{kestyn2016feast}
J.~Kestyn, E.~Polizzi, P.~T. Peter~Tang, {FEAST} eigensolver for non-hermitian
  problems, SIAM Journal on Scientific Computing 38~(5) (2016) S772--S799.

\bibitem{lecun1998mnist}
Y.~LeCun, The {MNIST} database of handwritten digits, http://yann. lecun.
  com/exdb/mnist/.

\bibitem{halko2011finding}
N.Halko, P.G.Martinsson, J.~A.Tropp, Finding structure with randomness:
  probabilistic algorithms for constructing approximate matrix decompositions,
  SIAM Rev. 53 (2011) 217--288.

\bibitem{polizzi2009density}
E.~Polizzi, A density matrix-based algorithm for solving eigenvalue problems,
  Phys. Rev. B 79 (2009) 115112.

\bibitem{saad2011numerical}
Y.~Saad, Numerical Methods for Large Eigenvalue Problems, 2nd ed., Manchester
  University Press, 2011.

\bibitem{sakurai2019scalable}
T.~Sakurai, Y.~Futamura, A.~Imakura, T.~Imamura, Scalable eigen-analysis engine
  for large-scale eigenvalue problems, in: Sato M. (eds) Advanced Software
  Technologies for Post-Peta Scale Computing, Springer, Singapore, 2019.

\bibitem{sakurai2003projection}
T.~Sakurai, H.~Sugiura, A projection method for generalized eigenvalue problems
  using numerical integration, Journal of computational and applied mathematics
  159~(1) (2003) 119--128.

\bibitem{sakurai2007cirr}
T.~Sakurai, H.~Tadano, {CIRR}: a {Rayleigh-Ritz} type method with counter
  integral for generalized eigenvalue problems, Hokkaido Math. J. 36 (2007)
  745--757.

\bibitem{schofield2012spectrum}
G.~Schofield, J.~R. Chelikowsky, Y.~Saad, A spectrum slicing method for the
  kohn-sham problem, Comput. Phys. Commun. 183 (2012) 497--505.

\bibitem{tang2014feast}
P.~T.~P. Tang, E.~Polizzi, {FEAST} as a subspace iteration eigensolver
  accelerated by approximate spectral projection, SIAM Journal on Matrix
  Analysis and Applications 35~(2) (2014) 354--390.

\bibitem{van2016nonlinear}
M.~Van~Barel, P.~Kravanja, Nonlinear eigenvalue problems and contour integrals,
  Journal of Computational and Applied Mathematics 292 (2016) 526--540.

\bibitem{yano2021efficient}
T.~Yano, Y.~Futamura, A.~Imakura, T.~Sakurai, Efficient implementation of a
  dimensionality reduction method using a complex moment-based subspace, in:
  The International Conference on High Performance Computing in Asia-Pacific
  Region, HPC Asia 2021, 2021.

\bibitem{yokota2013projection}
S.~Yokota, T.~Sakurai, A projection method for nonlinear eigenvalue problems
  using contour integrals, JSIAM Letters 5 (2013) 41--44.

\bibitem{z-Pares}
{z-Pares: Parallel Eigenvalue Solver}, \url{http://zpares.cs.tsukuba.ac.jp/}.

\end{thebibliography}
\bibliographystyle{elsart-num-sort}
\end{document}